\newcommand{\dd}{\mathrm{d}}
\newtheorem{theorem}{Theorem}
\newtheorem{proposition}[theorem]{Proposition}
\newtheorem{definition}{Definition}
\newtheorem{lemma}[theorem]{Lemma}
\newtheorem{remark}[theorem]{Remark}
\begin{document}

\title[Limiting problems for eigenvalue systems]{On the limiting problems for two eigenvalue systems and variations}
\author{H. Bueno}
\address{Departmento de Matem\'atica, Universidade Federal de  Minas Gerais, 31270-901 - Belo Horizonte - MG, Brazil}
\email{hamilton@mat.ufmg.br}

\author{Aldo H. S. Medeiros}
\address{Departamento de Matem\'{a}tica,
	Universidade Federal de Viçosa, 36570-900 - Vi\c{c}osa - MG, Brazil.}
\email{aldo.medeiros@ufv.br}

\subjclass{35R11, 35A15, 35D40} 
\keywords{fractional systems, variational methods, viscosity solutions}

\date{}
\begin{abstract}Let $\Omega$ be a bounded, smooth domain. Supposing that $\alpha(p) + \beta(p) = p$, $\forall\, p \in \left(\frac{N}{s},\infty\right)$ and $\displaystyle\lim_{p \to \infty} \alpha(p)/{p} = \theta \in (0,1)$, we consider two systems for the fractional $p$-Laplacian and a variation on the first system. The first system is the following.
$$\left\{\begin{array}{ll}
(-\Delta_p)^{s}u(x) = \lambda \alpha(p) \vert u \vert^{\alpha(p)-2} u \vert v(x_0)\vert^{\beta(p)}  & {\rm in} \ \ \Omega,\\
(-\Delta_p)^{t}v(x) = \lambda \beta(p) \left(\displaystyle\int_{\Omega}\vert u \vert^{\alpha(p)} \dd x\right) \vert v(x_0) \vert^{\beta(p)-2} v(x_0) \delta_{x_0} & {\rm in} \ \ \Omega,\\
u= v=0 & {\rm in} \ \mathbb{R}^N\setminus\Omega,\\
\end{array}\right.
$$
where $x_0$ is a point in $\overline{\Omega}$, $\lambda$ is a parameter, $0<s\leq t<1$, $\delta_x$ denotes the Dirac delta distribution centered at $x$ and $p>N/s$. 

A variation on this system is obtained by considering $x_0$ to be a point where the function $v$ attains its maximum. In this case, we denote $x_0=x_v$. 

The second one is the system
$$\left\{\begin{array}{ll}
(-\Delta_p)^{s}u(x) = \lambda \alpha(p) \vert u(x_1) \vert^{\alpha(p)-2} u(x_1) \vert v(x_2) \vert^{\beta(p)} \delta_{x_1} & {\rm in} \ \ \Omega,\\
(-\Delta_p)^{t}v(x) = \lambda \beta(p) \vert u(x_1) \vert^{\alpha(p)} \vert v(x_2) \vert^{\beta(p)-2} v(x_2) \delta_{x_2} & {\rm in} \ \ \Omega,\\
u= v=0 & {\rm in} \ \mathbb{R}^N\setminus\Omega,
\end{array}\right.
$$
where $x_1,x_2\in \Omega$ are arbitrary, $x_1\neq x_2$. Although we not consider here, a variation similar to that on the first system can be solved by practically the same method we apply. 

We obtain solutions for the systems (including the variation on the first system) and consider the asymptotic behavior of these solutions as $p\to\infty$. We prove that they converge, in the viscosity sense, to solutions of problems on $u$ and $v$. 
\end{abstract}

\maketitle

\section{Introduction}
In this paper we deal with different systems for the fractional $p$-Laplacian and study the behavior of their solutions $(u_p,v_p)$ as $p$ goes to infinity: we prove that these solutions converge, in the viscosity sense, to solutions $(u_\infty,v_\infty)$ of related systems.

Let $\Omega \subset \mathbb{R}^{N}$  be a bounded, smooth domain and, for each $x\in \Omega$, let $\delta_{x}$ be the Dirac mass concentrated at $x$. Consider also functions $\alpha,\beta\colon \left(\frac{N}{s},\infty\right) \to (1, \infty)$ satisfying
\begin{enumerate}
	\item[$(h_1)$] $\alpha(p) + \beta(p) = p$, $\forall\, p \in \left(\frac{N}{s},\infty\right)$;
	\item[$(h_2)$] $\displaystyle\lim_{p \to \infty} \frac{\alpha(p)}{p} = \theta \in (0,1)$.
\end{enumerate}

For each $p > \frac{N}{s}$, we consider the system 
\begin{equation} \label{P1}\tag{$P^1_{p}$}
\left\{\begin{array}{ll}
(-\Delta_p)^{s}u(x) = \lambda \alpha(p) \vert u \vert^{\alpha(p)-2} u \vert v(x_0)\vert^{\beta(p)}  & {\rm in} \ \ \Omega,\\
(-\Delta_p)^{t}v(x) = \lambda \beta(p) \left(\displaystyle\int_{\Omega}\vert u \vert^{\alpha(p)} \dd x\right) \vert v(x_0) \vert^{\beta(p)-2} v(x_0) \delta_{x_0} & {\rm in} \ \ \Omega,\\
u= v=0 & {\rm in} \ \mathbb{R}^N\setminus\Omega,\\
\end{array}\right.
\end{equation}
where $x_0$ is a point in $\overline{\Omega}$, $\lambda$ is a parameter, $0<s\leq t<1$ and  $(-\Delta_p)^{r}$ denotes the $r$-fractional $p$-Laplacian operator, which is defined, for any $p>1$, by
\begin{equation}
(-\Delta_p)^{r}\phi(x) = \lim_{\varepsilon \to 0} \int_{\mathbb{R}^{N}\setminus B_{\varepsilon}(x)} \frac{\vert\phi(x) - \phi(y) \vert^{p-2}(\phi(x) - \phi(y))}{\vert x- y \vert^{N+rp}} \dd x \dd y 
\end{equation}
for any $\phi \in C^{\infty}_0(\Omega)$, which is a dense subespace of $W^{r,p}_0(\Omega)$. We also recall that 
$$\big\langle (-\Delta_p)^r u, \varphi \big\rangle := \int_{\mathbb{R}^{N}}\int_{\mathbb{R}^{N}} \frac{\vert u(x) -u(y) \vert^{p-2}(u(x) - u(y))(\varphi(x) - \varphi(y))}{\vert x-y \vert^{N+rp}} \dd x \dd y
$$
is the expression of $(-\Delta_p)^r$ as an operator from $W^{r,p}_0(\Omega)$ into its dual. (The definition of the space $W^{r,p}_0(\Omega)$ will be given in the sequence.)

We first prove that, for each $p>N/s$, this system has a unique solution. Then we consider the behavior of a sequence of these solutions as $p\to \infty$ and prove that they converge uniformly to $(u_\infty,v_\infty)$, which are viscosity solutions of a related system. (Precise statements are given in the sequence.)

As a variation on system \eqref{P1}, we consider the system 
\begin{equation} \label{Pinfty}\tag{$P^1_\infty$}
\left\{\begin{array}{ll}
(-\Delta_p)^{s}u(x) = \lambda \alpha(p) \vert u \vert^{\alpha(p)-2} u \vert v(x_v)\vert^{\beta(p)}  & {\rm in} \ \ \Omega,\\
(-\Delta_p)^{t}v(x) = \lambda \beta(p) \left(\displaystyle\int_{\Omega}\vert u \vert^{\alpha(p)} \dd x\right) \vert v(x_v) \vert^{\beta(p)-2} v(x_v) \delta_{x_v} & {\rm in} \ \ \Omega,\\
u= v=0 & {\rm in} \ \mathbb{R}^N\setminus\Omega,
\end{array}\right.
\end{equation}
where $x_v$ is a maximum point of $v$ in $\overline{\Omega}$. Observe that the first equation in \eqref{Pinfty} can be replaced by $(-\Delta_p)^{s}u(x) = \lambda \alpha(p) \vert u \vert^{\alpha(p)-2} u \Vert v\Vert_{\infty}^{\beta(p)}$ in $\Omega$. To solve the above system we apply the same method used to handle problem \eqref{P1}, see Remark \ref{variation}. 

We also handle the system
\begin{equation} \label{P2}\tag{$P^2_{p}$}
\left\{\begin{array}{ll}
(-\Delta_p)^{s}u(x) = \lambda \alpha(p) \vert u(x_1) \vert^{\alpha(p)-2} u(x_1) \vert v(x_2) \vert^{\beta(p)} \delta_{x_1} & {\rm in} \ \ \Omega,\\
(-\Delta_p)^{t}v(x) = \lambda \beta(p) \vert u(x_1) \vert^{\alpha(p)} \vert v(x_2) \vert^{\beta(p)-2} v(x_2) \delta_{x_2} & {\rm in} \ \ \Omega,\\
u= v=0 & {\rm in} \ \mathbb{R}^N\setminus\Omega,
\end{array}\right.
\end{equation}
where $x_1,x_2\in\Omega$ are arbitrary points, $x_1\neq x_2$. 

Of course, we could also consider the case where $x_u$ and $x_v$ are points of maxima of $u$ and $v$, respectively, since our reasoning also solves this case.  

In Section 2--5 we handle system \eqref{P1}, while system \eqref{Pinfty} is considered in Remark \ref{variation}. Finally, in Section 6 we deal with problem \eqref{P2}.
\section{Background, setting and description of results}\label{setting}
Due to the appropriate Sobolev embedding, the solutions $(u,v)$ of both problems \eqref{P1} and \eqref{P2} must be continuous. 

Since both equations in the system have the same homogeneity, \eqref{P1} and \eqref{P2} are actually eigenvalue problems. The eigenvalue problem for the $s$-fractional $p$-Laplacian operator was studied by Lindgren and Lindqvist in the pioneering paper \cite{Lind}. Precisely, they studied the problem
\begin{equation}\label{1}
\left\{\begin{array}{ll}
(-\Delta_p)^{s}u = \lambda_1(s,p)  \vert u \vert^{p-2} u(x) & {\rm in} \ \ \Omega,\\
u=0 & {\rm in} \ \mathbb{R}^N\setminus\Omega.\\
\end{array}\right.
\end{equation}
The authors proved that the minimum of the Rayleigh quotient associated with \eqref{1}, that is,
$$\lambda_1(s,p) =\inf_{u\in W^{s,p}_0(\Omega)\setminus \{0\}} \frac{[u]_{s,p}^{p}}{\Vert u \Vert_p^p} = \frac{[\phi_p]_{s,p}^{p}}{\Vert \phi_p \Vert_p^p}.
$$
is attained by a function that does not change sign in $\Omega$.

In the case $p=\infty$ of the same paper, Lindgren and Lindqvist denoted
$$\lambda_1 (s,\infty) = \inf\left\{\frac{\left\Vert \frac{u(x) - u(y)}{\vert x-y \vert^{s}} \right\Vert_\infty}{\Vert u \Vert_\infty}\,:\,  u\in W^{s,\infty}_0(\Omega)\setminus \{0\}\right\}
$$
and showed that 
$$\lambda_1 (s,\infty) = \frac{1}{R^s} \qquad \text{and} \qquad \lim_{p \to \infty} \sqrt[p]{\lambda_1(s,p)} = \lambda_1(s,\infty),
$$
where $R = \underset{x\in\Omega}{\mathrm{max \  }}\textup{dist}(x,\mathbb{R}^{N}\setminus \Omega) = \Vert \textup{dist}(\cdot, \mathbb{R}^{N}\setminus\Omega) \Vert_\infty$.

The results obtained in relation with Eq. \eqref{1} were extended by Del Pezzo and Rossi in \cite{Del} to the case of systems of the form
\begin{equation}\label{2}
\left\{\begin{array}{ll}
(-\Delta_p)^r u(x) = \lambda \alpha(p) \vert u(x) \vert^{\alpha(p)-2} u(x) \vert v(x) \vert^{\beta(p)} & {\rm in} \ \ \Omega,\\
(-\Delta_p)^s v(x) = \lambda \beta(p) \vert u(x) \vert^{\alpha(p)} \vert v(x) \vert^{\beta(p)-2} v(x) & {\rm in} \ \ \Omega,\\
u= v=0 & {\rm in} \ \mathbb{R}^N\setminus\Omega,\\
\end{array}\right.
\end{equation}
when assumptions $(h_1)$ and $(h_2)$ are fulfilled. If for each $p \in (\frac{N}{s}, \infty)$ we denote
$$\lambda_{1,p} = \inf\left\{\frac{\frac{1}{p}[u]_{r,p}^{p} + \frac{1}{p}[v]_{s,p}^{p}}{\displaystyle\int_{\Omega} \vert u \vert^{\alpha(p)} \vert v \vert^{\beta(p)}\,\dd x}\,:\, (u,v) \in W^{s,p}(\Omega), \ \ uv \neq 0\right\}
$$
the authors showed that $\lambda_{1,p}$ is \emph{principal eigenvalue} (that is, an eigenvalue associated with an eigenfunction that does not change its sign) and 
\begin{equation}\label{lambdainfty}
\lambda_{s,p}^{\frac{1}{p}} \to \Lambda_{1,\infty} = \left[\frac{1}{R}\right]^{\theta r + (1-\theta)s}\ \ \text{as} \ \ p \to \infty.
\end{equation}

More recently, Mih\v{a}ilescu, Rossi and  Stancu-Dumitru \cite{MJRD} studied the system
\begin{equation}\label{M}
\left\{\begin{array}{ll}
-\Delta_p u(x) = \lambda \alpha(p) \vert u(x_1) \vert^{\alpha(p)-2} u(x_1) \vert v(x_2) \vert^{\beta(p)} \delta_{x_1} & {\rm in} \ \ \Omega,\\
-\Delta_pv(x) = \lambda \beta(p) \vert u(x_1) \vert^{\alpha(p)} \vert v(x_2) \vert^{\beta(p)-2} v(x_2) \delta_{x_2} & {\rm in} \ \ \Omega,\\
u= v=0 & {\rm on} \ \partial\Omega,\\
\end{array}\right.
\end{equation}
where $x_1,x_2\in \Omega$ are arbitrary points, $x_1\neq x_2$. If $x_1$ and $x_2$ are points of maxima of $u$ and $v$, respectively, using arguments like those in  \cite{GGC,GG,GGA}, it can be proved that \eqref{P2} is the limit, as $r\to \infty$, of the problem
\begin{equation}
	\left\{\begin{array}{ll}
		-\Delta_p u = \lambda \alpha(p) \Vert u \Vert_r^{\alpha(p)-r} \vert u \vert^r \Vert v \Vert_r^{\beta(p)} & {\rm in} \ \ \Omega,\\
		-\Delta_p v = \lambda \beta(p) \Vert u \Vert_r^{\alpha(p)} \Vert v \Vert_r^{\beta(p)-r} \vert v \vert^r & {\rm in} \ \ \Omega,\\
		u= v=0 & {\rm on} \ \partial\Omega,
	\end{array}\right.
\end{equation}
which can be solved by classical minimization procedures.

As in \cite{Del}, they proved that system \eqref{M} has a principal eigenvalue and studied the asymptotic behavior of the principal eigenvalues and corresponding positive eigenfunctions $u_p$ and $v_p$ as $p$ goes to infinity. \textcolor{red}{Mih\v{a}ilescu, Rossi and  Stancu-Dumitru proved that the converge to $u_\infty$ and $v_\infty$, both viscosity solutions of the equation $-\Delta_\infty w=0$ in .}

The main goal of this work is to study system \eqref{P1}. Note that this system is related to both systems \eqref{2} and \eqref{M}. In the last section of this article, we make clear that the method used to solve system \eqref{P1} also applies to system \eqref{P2}, thus generalizing system \eqref{M} from \cite{MJRD} to the fractional $p$-Laplacian operator. 

Due to the presence of the Dirac mass $\delta_x$, it is more natural to compare the present work with \cite{MJRD}. We note that the integral form of the fractional $p$-Laplacian is more difficult to handle than that of the $p$-Laplacian. Also, in \cite{MJRD}, it is valid the convergence  
$$\Vert \nabla u \Vert_{L^p(\Omega)} \to \Vert \vert \nabla u \vert \Vert_{L^\infty(\Omega)}, \ \ \text{for all} \ \ u \in W_{0}^{1,p}(\Omega)$$
in the $p$-Laplacian case, what does not happen when we are dealing with the Gagliardo semi-norm. Furthermore, a direct calculation with the distance function $\text{dist}(x, \mathbb{R}^{N}\setminus \Omega)$ shows that $\vert \nabla \text{dist}(x, \mathbb{R}^{N}\setminus \Omega) \vert = 1$, but this is not valid in our case, making more difficult to estimate the solutions of system \eqref{P2}. \textcolor{red}{Furthermore, the presence of the integral term in \eqref{P1} changes the equation that the viscosity solutions $u_\infty$ and $v_\infty$ satisfy, see Theorem \ref{thm4}.}

\textcolor{red}{On its turn, we will show that the eigenvalues of \eqref{P1} converge, as $p\to\infty$ to the same value $\Lambda_{1,\infty}$ given by \eqref{lambdainfty}, a result obtained in \cite{Del}.}

We introduce the notation used while handling problem \eqref{P1}. In the last section of this article, we consider problem \eqref{P2} and make the necessary adjustments. 

For each $0<r<1$ and  $p \in[1,\infty]$, we consider the Sobolev spaces $W^{r,p}(\Omega)$ 
$$W^{r,p}(\Omega) = \left\{u \in L^{p}(\Omega)\,:\, \int_{\Omega}\int_{\Omega}\frac{\vert u(x) - u(y) \vert^p}{\vert x-y \vert^{N+rp}} \dd x \dd y < \infty\right\}, 
$$
and also the spaces
$$W^{r,p}_0(\Omega)= \left\{u \in L^{p}(\mathbb{R}^{N})\,:\, u=0 \ \text{in} \ \ \mathbb{R}^{N}\setminus \Omega \ \text{and} \ [u]_{r,p} < \infty\right\},$$
where
$$[u]_{r,p}^p =\int_{\mathbb{R}^N}\int_{\mathbb{R}^{N}}\frac{\vert u(x) - u(y) \vert^p}{\vert x-y \vert^{N+rp}} \dd x \dd y.
$$

We recall that, for $0<s\leq t<1$ and $1<p<\infty$, there exists q constant $C>0$ depending only on $s$, $N$ and $p$ such that
$$\Vert f \Vert_{W^{s,p}(\Omega)} \leq C \Vert f \Vert_{W^{t,p}(\Omega)}, \ \ \text{for all} \ \ f \in W^{t,p}(\Omega).
$$
In particular, $W_{0}^{t,p}(\Omega) \hookrightarrow W_{0}^{s,p}(\Omega)$, for more details see \cite{guia}. So, we can consider only the space $W_0^{s,p}(\Omega)$.

For each $0<s\leq t<1$, $x_0 \in \Omega$ fixed and $p\in [1,\infty]$, we denote $X_{s,t,p}(\Omega) = W_{0}^{s,p}(\Omega) \times W_{0}^{t,p}(\Omega)$ and
$$X^{*}_{s,t,p}(\Omega) = \left\{(u,v) \in X_{s,t,p}(\Omega)\,:\,  \left(\int_{\Omega} \vert u \vert^{\alpha(p)} \dd x\right)v(x_0) \neq 0\right\}.$$

If $C_0(\overline{\Omega})$ stands for the space $\left\{ u \in C(\Omega)\,:\, u=0 \ \text{in} \  \mathbb{R}^{N}\setminus\Omega\right\}$, it is well-known that the immersion $W^{s,p}_0(\Omega) \hookrightarrow C_0(\overline{\Omega})$ is compact for any $p\in \left(\frac{N}{s},\infty\right)$. The compactness of this immersion is consequence of the following Morrey’s type inequality (see \cite{guia})
\begin{equation}\label{const}
\sup_{y \neq x} \frac{\vert u(x) - u(y)\vert }{\vert x- y \vert^{s-\frac{N}{p}}} \leq C [u]_{s,p}, \ \ \forall u \in W_0^{s,p}(\Omega),
\end{equation}
which holds whenever $p > \frac{N}{s}$. If $p$ is sufficiently large, the positive constant $C$ in \eqref{const} can be chosen uniformly with respect to $p$ (see \cite{Ferreira}, Remark 2.2).

Thus, denoting
$$X_0(\Omega) = C_0(\overline{\Omega}) \times C_0(\overline{\Omega}),
$$
we have the compact immersion 
$$X_{s,t,p}(\Omega) \hookrightarrow X_0(\Omega)
$$ 
for any $p\in\left(\frac{N}{s},\infty\right)$. 

For $p \in \left(\frac{N}{s}, \infty\right)$ and $u,v\in X^*_{s,t,p}$, we define
$$Q_{s,t,p}(u,v)=\frac{\frac{1}{p}[u]_{s,p}^{p} + \frac{1}{p}[v]_{t,p}^{p}}{ \left(\displaystyle\int_{\Omega} \vert u \vert^{\alpha(p)} \dd x\right)\vert v(x_0) \vert^{\beta(p)}}
$$
and
$$\Lambda_{1}(p) = \inf_{(u,v) \in X^{*}_{s,t,p}(\Omega)} Q_{,s,t,p}(u,v).
$$

Straightforward calculations show that 
\begin{equation}\label{eq3}
\frac{\dd}{\dd t}\bigg|_{t=0}\left(\frac{1}{p}[u+t\varphi]_{r,p}^{p}\right) = \big\langle (-\Delta_p)^ru, \varphi \big\rangle, \ \ \forall \varphi \in W_{0}^{r,p}(\Omega).
\end{equation}
If $0<m<\infty$, then
\begin{equation}\label{eq4}
\frac{\dd}{\dd t}\bigg|_{t=0}\vert (u+t\varphi)(x) \vert^{m} = m \vert u(x) \vert^{m-2}u(x)\varphi(x), \ \ \forall\, \varphi \in L^m(\Omega).
\end{equation}

We also have, for all $1<\alpha<\infty$ and $\varphi \in L^\alpha(\Omega)$,
\begin{equation}\label{eq5}
\frac{\dd}{\dd t}\bigg|_{t=0}\left(\int_{\Omega}\vert (u+t\varphi)(x) \vert^{\alpha}\dd x\right)\vert v(x_0) \vert^{\beta} = \alpha \left(\int_{\Omega}\vert u(x) \vert^{\alpha-2}u(x) \varphi(x) \dd x\right)\vert v(x_0) \vert^{\beta}. 
\end{equation}

\begin{definition}
A pair  $(u,v) \in X_{s,t,p}(\Omega)$ is a weak solution to \eqref{P1} if 
\begin{align}\label{defi1}
\left\langle (-\Delta_p)^s u, \varphi \right\rangle + \left\langle (-\Delta_p)^t v, \psi \right\rangle = &\lambda\left[\alpha(p)\vert u\vert^{\alpha(p)-2}u(x)\vert  v(x_0) \vert^{\beta(p)}\varphi(x)\right. \\
&\left. + \ \beta(p)\left(\int_{\Omega}\vert u(x) \vert^{\alpha(p)} \dd x\right)\vert v(x_0) \vert^{\beta(p)-2} v(x_0) \psi(x_0) \right]\nonumber
\end{align}
for all $(\varphi,\psi) \in X_{s,t,p}(\Omega)$.
\end{definition}

The functional at
the left-hand side of \eqref{defi1} is the Gâteaux derivative of the Fréchet differentiable functional $(u,v) \mapsto \displaystyle\frac{1}{p} [u]_{s,p}^{p} + \displaystyle\frac{1}{p} [v]_{t,p}^{p}$.
However, the functional at the right-hand side of \eqref{defi1} is merely related to the right-hand 
Gâteaux-derivative of the functional $(u,v) \mapsto \lambda \left(\displaystyle\int_{\Omega}\vert u(x) \vert^{\alpha(p)} \dd x \right) \vert v(x_0) \vert^{\beta(p)}$, thus motivating the definition of $Q_{p}$ and $\Lambda_1(p)$. It is noteworthy that minimizing that integral term is enough to minimize the whole system.

By applying minimization methods, our first result shows that the problem \eqref{P1} has a principal eigenvalue -- and therefore, a weak solution -- for each $p \in \left(\frac{N}{s},\infty\right)$. Its proof simply adapts Theorem 1 in \cite{MJRD}. We sketch the proof for the convenience of the reader in Section \ref{remarks}.
\begin{theorem}\label{thm1}
For each $p \in \left(\frac{N}{s}, \infty\right)$ we have
\begin{enumerate}
\item[$(i)$] $\Lambda_1(p) > 0$;
\item[$(ii)$] there exists $(u_p,v_p) \in X^{*}_{s,t,p}(\Omega)$ such that 
\[\Lambda_{1}(p) = Q_{s,t,p}(u_p,v_p),\]	
with $u_p,v_p > 0$ and $\left(\displaystyle\int_{\Omega} \vert u_p \vert^{\alpha(p)} \dd x\right)\vert  v_p(x_0) \vert^{\beta(p)} = 1$.
\end{enumerate}
\end{theorem}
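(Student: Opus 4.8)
The plan is to establish (i) and (ii) by a direct minimization argument for the quotient $Q_{s,t,p}$ over the constraint manifold, exactly in the spirit of Theorem 1 in \cite{MJRD}, using the compact embeddings set up above. First I would reduce to a constrained problem: since $Q_{s,t,p}$ is invariant under the scalings $u\mapsto au$, $v\mapsto bv$ (here one checks that multiplying $u$ by $a>0$ multiplies both numerator's first term and the denominator by $a^{\alpha(p)}$ up to the usual homogeneity, and similarly for $v$), one may restrict attention to pairs $(u,v)\in X^*_{s,t,p}(\Omega)$ normalized by $\bigl(\int_\Omega |u|^{\alpha(p)}\,\dd x\bigr)|v(x_0)|^{\beta(p)}=1$, so that $\Lambda_1(p)=\inf\{\tfrac1p[u]_{s,p}^p+\tfrac1p[v]_{t,p}^p\}$ over this set. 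Replacing $u$ and $v$ by $|u|$ and $|v|$ does not increase the Gagliardo seminorms and leaves the constraint unchanged, so one may also assume $u,v\ge 0$ at the outset, which will later give the sign claim.

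Next I would take a minimizing sequence $(u_n,v_n)$ for this constrained problem. The constraint keeps $\tfrac1p[u_n]_{s,p}^p+\tfrac1p[v_n]_{t,p}^p$ bounded, hence $[u_n]_{s,p}$ and $[v_n]_{t,p}$ are bounded; since $p>N/s$ (and $t\ge s$, so $W_0^{t,p}\hookrightarrow W_0^{s,p}$), the compact immersion $X_{s,t,p}(\Omega)\hookrightarrow X_0(\Omega)$ stated above gives, up to a subsequence, $u_n\to u_p$ and $v_n\to v_p$ uniformly on $\overline\Omega$, with weak convergence in $X_{s,t,p}(\Omega)$. Uniform convergence immediately passes the constraint to the limit: $v_n(x_0)\to v_p(x_0)$ and, by dominated convergence (the $u_n$ are uniformly bounded), $\int_\Omega|u_n|^{\alpha(p)}\dd x\to\int_\Omega|u_p|^{\alpha(p)}\dd x$, so $\bigl(\int_\Omega|u_p|^{\alpha(p)}\dd x\bigr)|v_p(x_0)|^{\beta(p)}=1$; in particular $(u_p,v_p)\in X^*_{s,t,p}(\Omega)$, and $u_p,v_p\ge 0$. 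Weak lower semicontinuity of the convex functionals $u\mapsto[u]_{s,p}^p$ and $v\mapsto[v]_{t,p}^p$ (each is the $p$-th power of a norm on a reflexive Banach space) then yields $\tfrac1p[u_p]_{s,p}^p+\tfrac1p[v_p]_{t,p}^p\le\liminf_n(\cdots)=\Lambda_1(p)$, and since $(u_p,v_p)$ is admissible the reverse inequality is automatic, so the infimum is attained: this is (ii) except for the \emph{strict} positivity of $u_p,v_p$.

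For (i), strict positivity $\Lambda_1(p)>0$: suppose $\Lambda_1(p)=0$; then along a minimizing sequence $[u_n]_{s,p},[v_n]_{t,p}\to 0$, but Morrey's inequality \eqref{const} forces $u_n\to 0$ and $v_n\to 0$ uniformly (they vanish outside $\Omega$ and have vanishing oscillation), contradicting the constraint $\bigl(\int_\Omega|u_n|^{\alpha(p)}\dd x\bigr)|v_n(x_0)|^{\beta(p)}=1$; more simply, for any admissible pair the constraint bounds $\|u\|_\infty$ and $|v(x_0)|$ below, and \eqref{const} bounds these by $C[u]_{s,p}$ and $C[v]_{t,p}$, so the numerator is bounded below by a positive constant, giving $\Lambda_1(p)>0$. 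Finally, to upgrade $u_p,v_p\ge 0$ to $u_p,v_p>0$ in $\Omega$, I would note that $(u_p,v_p)$ is a weak solution of \eqref{P1} with $\lambda=\Lambda_1(p)$ (it is a critical point of the quotient, via the Gâteaux-derivative computations \eqref{eq3}--\eqref{eq5}), so each of $u_p$, $v_p$ is a nonnegative, nontrivial weak (super)solution of a fractional $p$-Laplacian equation $(-\Delta_p)^r w = f\ge 0$ with $f\not\equiv 0$, and the strong minimum principle / Harnack inequality for the fractional $p$-Laplacian (as in \cite{Lind}) forces $w>0$ throughout $\Omega$. The main obstacle is the delicate point that the right-hand side of \eqref{defi1} is only the \emph{right} Gâteaux derivative of the denominator (because of the $|v(x_0)|^{\beta(p)}$ term evaluated at a single point and the integral term), so one must verify carefully — as in the treatment of \eqref{M} in \cite{MJRD} — that the minimizer is nonetheless a genuine weak solution and that the sign and positivity arguments go through despite this one-sided differentiability; this is precisely where ``simply adapts Theorem 1 in \cite{MJRD}'' is doing real work, and I would carry it out in Section \ref{remarks} by the same perturbation argument used there.
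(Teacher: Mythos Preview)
Your proposal is correct and follows essentially the same approach as the paper's sketch in Section~\ref{remarks}: normalize a minimizing sequence to the constraint set, use the compact embedding $X_{s,t,p}(\Omega)\hookrightarrow X_0(\Omega)$ together with weak lower semicontinuity of the Gagliardo seminorms to produce a minimizer, verify it is a weak solution via the derivative computations \eqref{eq3}--\eqref{eq5}, and conclude strict positivity from the minimum principle as in \cite[Lemma~22]{Lind}. One small correction: the quotient $Q_{s,t,p}$ is \emph{not} invariant under independent scalings $u\mapsto au$, $v\mapsto bv$ (the numerator's first term scales as $a^{p}$, not $a^{\alpha(p)}$); what is true, and what the paper uses, is invariance under the \emph{joint} scaling $(u,v)\mapsto (cu,cv)$, since $\alpha(p)+\beta(p)=p$, and this suffices for the normalization step.
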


The next step is to look for an operator that  will motivate the study of the problem \eqref{P1} as $p\to \infty$. So, for each $0<s\leq t <1$ and $p \in \left(\frac{N}{s}, \infty\right)$ we denote 
\begin{align*}S_{p} &= \left\{(u,v) \in X_{s,t,p}(\Omega)\,:\, \left(\int_{\Omega} \vert u \vert^{\alpha(p)} \dd x\right)\vert  v(x_0) \vert^{\beta(p)} = 1\right\}\\ S_{\infty} &= \left\{(u,v) \in X_{s,t,\infty}(\Omega)\,:\, \Vert u\Vert_{\infty}^{\theta}\vert  v(x_0) \vert^{1-\theta} = 1\right\},
\end{align*}
where $\theta$ was defined in ($h_2$).

Furthermore, for each $0<s\leq t<1$ and $p \in \left(\frac{N}{s}, \infty\right]$, we  define the functions $\chi_{S_{p}}\colon X_0(\Omega) \to [0,\infty]$ and $F_{p}\colon X_0(\Omega) \to [0,\infty]$ by
\begin{equation}\label{chi} 
\chi_{S_{p}}(u,v) = \left\{\begin{array}{ll}
0, & \text{if} \quad (u,v) \in S_{p};\\
\infty, & \text{otherwise}\\
\end{array}\right.
\end{equation}
and
\begin{equation}\label{Fsp} 
F_{p}(u,v) = \left\{\begin{array}{ll}
G_{p}(u,v) + \chi_{S_{p}}(u,v), & \text{if} \quad (u,v) \in X^{*}_{s,t,p}(\Omega);\\
\infty, & \text{otherwise},
\end{array}\right.
\end{equation}
with $G_{p}$ defined by
\begin{equation} 
G_{p}(u,v) = \left\{\begin{array}{ll}
Q_{s,t,p}(u,v)^{\frac{1}{p}}, &\text{if} \quad p \in (\frac{N}{s},\infty),\vspace*{.1cm}\\
\displaystyle\frac{\max\left\{\vert u \vert_{s}, \vert v \vert_{t}\right\}}{\Vert u\Vert_{\infty}^{\theta}\vert  v(x_0) \vert^{1-\theta}}, & \text{if}\quad p= \infty,\\
\end{array}\right.
\end{equation}
where, for $0 < \sigma<1$,
$$\vert u \vert_{\sigma} = \sup_{y \neq x}\frac{\vert u(x) - u(y)\vert }{\vert x- y \vert^{\sigma}}.
$$

The method we apply is known as $\Gamma$-convergence, but everything we use are the properties listed in Theorem \ref{thm2}. Once again, the next result follows from a straightforward adaptation of the proof of \cite[Theorem 2]{MJRD}.

\begin{theorem}\label{thm2} The function $F_{\infty}$ satisfy the following  properties.
\begin{enumerate}
\item [$(i)$] If $\{(u_p,v_p)\}$ is a sequence  such that  $(u_p,v_p) \to (u,v)$ in $X_0(\Omega)$, then
\[F_{\infty}(u,v) \leq \lim_{p\to \infty}\inf F_{p}(u_p,v_p).
\]
\item [$(ii)$] For each $(u,v)\in X_0(\Omega)$, there exists a sequence $\{(U_p,V_p)\}\subset X_0(\Omega)$ such that $(U_p,V_p) \to (u,v)$ in $X_0(\Omega)$ and
\[F_{\infty}(u,v) \geq \lim_{p\to \infty}\sup F_{p}(U_p,V_p).
\]
\end{enumerate}
\end{theorem}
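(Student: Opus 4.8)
The statement asserts exactly the $\Gamma$-convergence of $\{F_p\}$ to $F_\infty$ on $X_0(\Omega)$: part $(i)$ is the $\Gamma$-$\liminf$ inequality and part $(ii)$ is the existence of a recovery sequence. As in \cite{MJRD} the overall scheme is the classical one, and the only genuinely new ingredient is that the Gagliardo seminorm $[\,\cdot\,]_{\sigma,p}$ plays the role of $\|\nabla\,\cdot\,\|_p$; I would organise the whole argument around this substitution. In both parts one may assume the relevant limit is finite, so that the constraints $S_p$ and the terms $\chi_{S_p}$ are inactive and $F_p=\bigl(\tfrac1p[u]_{s,p}^p+\tfrac1p[v]_{t,p}^p\bigr)^{1/p}$ on the admissible pairs (the denominator of $Q_{s,t,p}$ being $1$ there, by definition of $S_p$).

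For $(i)$, given $(u_p,v_p)\to(u,v)$ in $X_0(\Omega)$ with $L:=\liminf_pF_p(u_p,v_p)<\infty$, I would pass to a subsequence along which $F_p(u_p,v_p)\to L$ is finite, so $(u_p,v_p)\in S_p\cap X^*_{s,t,p}(\Omega)$ and $F_p(u_p,v_p)=\bigl(\tfrac1p[u_p]_{s,p}^p+\tfrac1p[v_p]_{t,p}^p\bigr)^{1/p}$. First I would check $(u,v)\in S_\infty$: since $u_p\to u$ uniformly on $\overline\Omega$ and $\alpha(p)\to\infty$ (by $(h_2)$), we have $\bigl(\int_\Omega|u_p|^{\alpha(p)}\,\dd x\bigr)^{1/\alpha(p)}=\|u_p\|_{L^{\alpha(p)}(\Omega)}\to\|u\|_\infty$, and raising $\bigl(\int_\Omega|u_p|^{\alpha(p)}\,\dd x\bigr)|v_p(x_0)|^{\beta(p)}=1$ to the power $1/p$ and using $(h_1)$, $(h_2)$ and $v_p(x_0)\to v(x_0)$ yields $\|u\|_\infty^\theta|v(x_0)|^{1-\theta}=1$; hence $F_\infty(u,v)=G_\infty(u,v)=\max\{|u|_s,|v|_t\}$. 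It then suffices to prove $\liminf_p\bigl(\tfrac1p[u_p]_{s,p}^p\bigr)^{1/p}\ge|u|_s$, and its analogue for $v$, since $F_p(u_p,v_p)\ge p^{-1/p}\max\{[u_p]_{s,p},[v_p]_{t,p}\}$. For this I would localise: given $\eta>0$, pick $x_*\ne y_*$ with $|u(x_*)-u(y_*)|/|x_*-y_*|^s>|u|_s-\eta$ (or $>M$, with $M$ arbitrary, if $|u|_s=\infty$); by continuity of the difference quotient off the diagonal together with uniform convergence there is a ball $B\ni(x_*,y_*)$, bounded away from the diagonal, with $\mu(B):=\int\!\!\int_B|x-y|^{-N}\,\dd x\,\dd y\in(0,\infty)$, on which $|u_p(x)-u_p(y)|/|x-y|^s>|u|_s-2\eta$ for all large $p$; restricting the Gagliardo integral to $B$ gives $\bigl(\tfrac1p[u_p]_{s,p}^p\bigr)^{1/p}\ge p^{-1/p}(|u|_s-2\eta)\,\mu(B)^{1/p}\to|u|_s-2\eta$, and $\eta\downarrow0$ finishes $(i)$.

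For $(ii)$, assuming $F_\infty(u,v)<\infty$ (otherwise a constant sequence works), we have $(u,v)\in S_\infty$, $u\in W^{s,\infty}_0(\Omega)$, $v\in W^{t,\infty}_0(\Omega)$ and $F_\infty(u,v)=\max\{|u|_s,|v|_t\}$. The plan is first to regularise: using the smoothness of $\partial\Omega$, contract the supports of $u,v$ slightly into $\Omega$ and then mollify, producing Lipschitz functions $\hat u_j,\hat v_j$ with compact support in $\Omega$, $\hat u_j\to u$, $\hat v_j\to v$ uniformly, $|\hat u_j|_s\to|u|_s$ and $|\hat v_j|_t\to|v|_t$ (these operations change Hölder seminorms only by factors tending to $1$, and $\liminf_j|\hat u_j|_s\ge|u|_s$ follows pointwise from uniform convergence). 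For fixed $j$, set $t^{(j)}_p:=\bigl[\bigl(\int_\Omega|\hat u_j|^{\alpha(p)}\,\dd x\bigr)|\hat v_j(x_0)|^{\beta(p)}\bigr]^{-1/p}$ and $(U^{(j)}_p,V^{(j)}_p):=(t^{(j)}_p\hat u_j,t^{(j)}_p\hat v_j)\in S_p\cap X^*_{s,t,p}(\Omega)$, so that $t^{(j)}_p\to\tau_j:=(\|\hat u_j\|_\infty^\theta|\hat v_j(x_0)|^{1-\theta})^{-1}$ as $p\to\infty$. To control the seminorms I would fix $q>N/s$; since $\hat u_j$ is Lipschitz with compact support, $[\hat u_j]_{s,q}<\infty$, and from
\[
[\hat u_j]_{s,p}^p=\int\!\!\int\Bigl(\tfrac{|\hat u_j(x)-\hat u_j(y)|}{|x-y|^s}\Bigr)^{p-q}\frac{|\hat u_j(x)-\hat u_j(y)|^q}{|x-y|^{N+sq}}\,\dd x\,\dd y\le|\hat u_j|_s^{\,p-q}\,[\hat u_j]_{s,q}^q
\]
one gets $[\hat u_j]_{s,p}\le|\hat u_j|_s^{1-q/p}[\hat u_j]_{s,q}^{q/p}$, hence $\limsup_p[\hat u_j]_{s,p}\le|\hat u_j|_s$, and likewise for $\hat v_j$. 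Thus $\limsup_pF_p(U^{(j)}_p,V^{(j)}_p)\le\tau_j\max\{|\hat u_j|_s,|\hat v_j|_t\}$, which tends to $\max\{|u|_s,|v|_t\}=F_\infty(u,v)$ as $j\to\infty$ (here $\tau_j\to1$ precisely because $(u,v)\in S_\infty$), while $(U^{(j)}_p,V^{(j)}_p)\to(\tau_j\hat u_j,\tau_j\hat v_j)\to(u,v)$ in $X_0(\Omega)$; a standard diagonal argument over $(j,p)$ then extracts the desired recovery sequence.

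The main obstacle, which is exactly what is absent in the local setting of \cite{MJRD}, is that $\mu=|x-y|^{-N}\,\dd x\,\dd y$ is not a finite measure, so $\|\cdot\|_{L^p(\mu)}$ does not pass to $\|\cdot\|_{L^\infty(\mu)}$ in the naive way: this is what forces the localisation in $(i)$, and, dually, a merely Hölder-$s$ function need not belong to $W^{s,q}_0(\Omega)$, so in $(ii)$ one cannot simply rescale $(u,v)$ and must regularise first. I expect the bookkeeping in that final diagonalisation — tracking the two parameters introduced by the contraction and mollification together with $t^{(j)}_p\to\tau_j$ — to be the most technical, albeit routine, part; everything else mirrors \cite[Theorem 2]{MJRD}.
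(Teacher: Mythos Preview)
Your proposal is correct, but both halves follow routes different from the paper's sketch. For $(i)$ the paper does not localise: it fixes an auxiliary exponent $k>N/s$, bounds the $L^k$-type Gagliardo integrals of $(u_p,v_p)$ by $F_p(u_p,v_p)$ via H\"older's inequality, passes $p\to\infty$ using Fatou and uniform convergence, and finally sends $k\to\infty$ to recover $\max\{|u|_s,|v|_t\}$; your pointwise localisation is more direct and elementary, while the paper's double-limit device has the advantage of being reused verbatim in the proof of Theorem~\ref{thm3}. For $(ii)$ the paper simply rescales the given pair $(u,v)$ itself (exactly as in \cite{MJRD}), setting $U_p=t_p u$, $V_p=t_p v$ with $t_p$ the normalising factor, and then asserts $\limsup_p F_p(U_p,V_p)=\max\{|u|_s,|v|_t\}$, implicitly via Lemma~\ref{lemmaasymp}. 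You are right that, unlike in the local case of \cite{MJRD} where $W^{1,\infty}_0(\Omega)\subset W^{1,p}_0(\Omega)$ is trivial on a bounded domain, a merely $s$-H\"older $u$ need not lie in any $W^{s,q}_0(\Omega)$, so the hypothesis of Lemma~\ref{lemmaasymp} is not automatic; your contraction--mollification--diagonalisation is precisely what makes the recovery-sequence construction self-contained in the fractional setting, at the price of the extra bookkeeping you already anticipate.
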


Thus, as a consequence of Theorem \ref{thm2}-($i$), we have
$$F_{\infty}(u,v) \leq \lim_{p\to \infty}\inf F_{p}(u_p,v_p). $$ 
Applying this inequality to the solutions $(u_p,v_p)$ given by Theorem \ref{thm1}, we obtain the estimate 
\begin{equation}\label{F}
F_{\infty}(u,v) \leq \lim_{p\to \infty}\inf \Lambda_1(p)^{\frac{1}{p}} = \frac{1}{R^{s\theta + (1-\theta)t}} = \max\{\vert u_\infty \vert_s, \vert v_\infty \vert_t\},
\end{equation}
where the last equality will be shown in the proof of Theorem \ref{thm3}. As a consequence of Theorem \ref{thm2}-($ii$) and \eqref{F}, we can analyze problem \eqref{P1} as $p \to \infty$.

Therefore, considering Theorems \ref{thm1} and \ref{thm2}, we study the behavior of the eigenvalues and eigenfunctions of problem \eqref{P1} as $p\to \infty$. 

\begin{theorem}\label{thm3}
Let  $\{p_n\}$ be a sequence converging to $\infty$ and $(u_{p_n}, v_{p_n})$ the solution of \eqref{P1} given in  Theorem \ref{thm1}. Passing to a subsequence if necessary,  $\{(u_{p_n}, v_{p_n})\}_{n \in \mathbb{N}}$ converges uniformly to $(u_\infty, v_\infty) \in C_{0}^{0,s}(\overline{\Omega}) \times C_{0}^{0,t}(\overline{\Omega})$. Furthermore
\begin{enumerate}
\item [$(i)$] $u_\infty \geq 0$, $v_\infty \geq 0$ and $\| u_\infty \|^{\theta}_\infty \vert v_\infty(x_0) \vert^{1 - \theta}=1$;
\item [$(ii)$] $\displaystyle\lim_{n\to \infty} \sqrt[p_n]{\Lambda_1(p_n)} = \Lambda_{1,\infty} = \frac{1}{R^{s\theta + (1-\theta)t}}$;
\item [$(iii)$] $\max\left\{\vert u_\infty \vert_s, \vert v_\infty \vert_t \right\} = \displaystyle\frac{1}{R^{s\theta + (1-\theta)t}}.$
\end{enumerate}
\end{theorem}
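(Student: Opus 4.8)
The plan is to run the standard ``minimizers of $F_p$ converge to a minimizer of $F_\infty$'' argument of $\Gamma$-convergence, the two nontrivial ingredients being a compactness step (Arzel\`a--Ascoli via the Morrey inequality \eqref{const} with a constant uniform in $p$) and an explicit computation of $\inf F_\infty$.

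\textbf{Step 1 (a priori bounds and compactness).} By Theorem \ref{thm1} the pair $(u_p,v_p)$ lies in $S_p$, so the denominator of $Q_{s,t,p}$ equals $1$ there and $[u_p]_{s,p}^{p}+[v_p]_{t,p}^{p}=p\,\Lambda_1(p)$; it therefore suffices to bound $\Lambda_1(p_n)^{1/p_n}$. Since $0$-homogeneity of $Q_{s,t,p}$ lets one rescale any element of $X^{*}_{s,t,p}(\Omega)$ into $S_p$, we have $\inf F_p=\Lambda_1(p)^{1/p}$, and applying Theorem \ref{thm2}-$(ii)$ to any pair $(u,v)$ with $F_\infty(u,v)<\infty$ (e.g. suitably normalized smooth functions positive at $x_0$) gives $\limsup_{p\to\infty}\Lambda_1(p)^{1/p}=\limsup_{p\to\infty}\inf F_p\le F_\infty(u,v)$; taking the infimum over such $(u,v)$ yields $\limsup_{p\to\infty}\Lambda_1(p)^{1/p}\le\inf F_\infty<\infty$. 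Consequently $[u_{p_n}]_{s,p_n}$ and $[v_{p_n}]_{t,p_n}$ stay bounded for $n$ large; inserting these into \eqref{const} and its analogue for the exponent $t$ gives, for $n$ large,
\[
\vert u_{p_n}(x)-u_{p_n}(y)\vert\le C\,\vert x-y\vert^{s-N/p_n},\qquad
\vert v_{p_n}(x)-v_{p_n}(y)\vert\le C\,\vert x-y\vert^{t-N/p_n},
\]
with $C$ independent of $n$; taking $y\notin\Omega$ with $\vert x-y\vert=\mathrm{dist}(x,\mathbb{R}^N\setminus\Omega)\le R$ also gives uniform $L^\infty$ bounds. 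As $s-N/p_n\to s>0$ and $t-N/p_n\to t>0$, these estimates give equicontinuity and uniform boundedness, so by Arzel\`a--Ascoli a subsequence of $\{(u_{p_n},v_{p_n})\}$ converges uniformly in $X_0(\Omega)$ to some $(u_\infty,v_\infty)$; letting $p_n\to\infty$ in the inequalities above with $x,y$ fixed gives $u_\infty\in C_0^{0,s}(\overline\Omega)$ and $v_\infty\in C_0^{0,t}(\overline\Omega)$, while $u_{p_n},v_{p_n}>0$ forces $u_\infty,v_\infty\ge0$.

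\textbf{Step 2 ($\Gamma$-liminf and the limiting constraint).} Because $(u_{p_n},v_{p_n})\in S_{p_n}\cap X^{*}_{s,t,p_n}(\Omega)$ we have $F_{p_n}(u_{p_n},v_{p_n})=Q_{s,t,p_n}(u_{p_n},v_{p_n})^{1/p_n}=\Lambda_1(p_n)^{1/p_n}$, so Theorem \ref{thm2}-$(i)$ gives $\inf F_\infty\le F_\infty(u_\infty,v_\infty)\le\liminf_{n\to\infty}\Lambda_1(p_n)^{1/p_n}$. In particular $F_\infty(u_\infty,v_\infty)<\infty$, so by the definition of $F_\infty$ the pair $(u_\infty,v_\infty)$ belongs to $S_\infty$; thus $\Vert u_\infty\Vert_\infty^{\theta}\vert v_\infty(x_0)\vert^{1-\theta}=1$, which together with $u_\infty,v_\infty\ge0$ is part $(i)$, and moreover $F_\infty(u_\infty,v_\infty)=\max\{\vert u_\infty\vert_s,\vert v_\infty\vert_t\}$.

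\textbf{Step 3 (value of $\inf F_\infty$ and conclusion).} On $S_\infty$ one has $F_\infty=\max\{\vert u\vert_s,\vert v\vert_t\}$, so by $0$-homogeneity $\inf F_\infty=\inf\{\max\{\vert u\vert_s,\vert v\vert_t\}:\ \Vert u\Vert_\infty^{\theta}\vert v(x_0)\vert^{1-\theta}=1\}$. For the lower bound, since $u,v$ vanish outside $\Omega$ and are H\"older continuous, estimating $\vert u(x)\vert$ and $\vert v(x_0)\vert$ by the H\"older increment from the nearest exterior point gives $\Vert u\Vert_\infty\le R^{s}\vert u\vert_s$ and $\vert v(x_0)\vert\le R^{t}\vert v\vert_t$, hence $1\le R^{s\theta+(1-\theta)t}\max\{\vert u\vert_s,\vert v\vert_t\}$ and $\inf F_\infty\ge R^{-(s\theta+(1-\theta)t)}$. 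For the reverse inequality I would test with normalizations to $S_\infty$ of the cone-type functions built from $\mathrm{dist}(\cdot,\mathbb{R}^N\setminus\Omega)$ used in \cite{Lind,Del} for the $p=\infty$ eigenvalue problems, whose $s$- and $t$-H\"older seminorms approach $R^{-(s\theta+(1-\theta)t)}$; thus $\inf F_\infty=R^{-(s\theta+(1-\theta)t)}$. Combining with Steps 1 and 2,
\[
R^{-(s\theta+(1-\theta)t)}=\inf F_\infty\le F_\infty(u_\infty,v_\infty)\le\liminf_{n}\Lambda_1(p_n)^{1/p_n}\le\limsup_{n}\Lambda_1(p_n)^{1/p_n}\le R^{-(s\theta+(1-\theta)t)},
\]
so all terms coincide: $\lim_{n}\Lambda_1(p_n)^{1/p_n}=\Lambda_{1,\infty}=R^{-(s\theta+(1-\theta)t)}$, which is $(ii)$, and $(u_\infty,v_\infty)$ minimizes $F_\infty$, so $\max\{\vert u_\infty\vert_s,\vert v_\infty\vert_t\}=F_\infty(u_\infty,v_\infty)=\inf F_\infty=R^{-(s\theta+(1-\theta)t)}$, which is $(iii)$.

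The hard part will be Step 3: exhibiting the admissible pairs whose H\"older seminorms attain the lower bound $R^{-(s\theta+(1-\theta)t)}$ (which is precisely what delivers the equality in $(iii)$), together with the use of \eqref{const} with a constant that is uniform in $p$, needed in Step 1. Everything else is the routine $\Gamma$-convergence fact that uniform limits of (almost) minimizers are minimizers, once Theorems \ref{thm1} and \ref{thm2} are in hand.
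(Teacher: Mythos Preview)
Your proposal is correct and follows essentially the same route as the paper: the paper also uses Morrey's inequality \eqref{const} (with constant uniform in $p$) for compactness, the explicit cone functions $\phi_R(x)=R^{(\theta-1)t-s\theta}(R-|x-x_1|)^s_+$ and $\psi_R(x)=R^{(\theta-1)t-s\theta}(R-|x-x_0|)^t_+$ as test pairs for the upper bound, and the H\"older-increment-from-the-boundary estimate for the lower bound (packaged as Lemma~\ref{prop1}). The only organizational difference is that the paper carries out the $\limsup$ and $\liminf$ inequalities by hand (via the test pair $(\phi_R,\psi_R)$ in $Q_{s,t,p_n}$ and a Fatou/H\"older argument on $u_\infty$, $v_\infty$), whereas you invoke Theorem~\ref{thm2} as a black box for both; since Theorem~\ref{thm2} encodes exactly those two computations, the arguments coincide.
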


As we will see in the sequence, the functions $u_\infty$ and $v_\infty$ are solutions, in the viscosity sense, of regular boundary value problems. In order to distinguish between the cases (and also to avoid a double minus sign), we change notation: for each $1<p<\infty$ we denote the $\sigma$-fractional $p$-Laplacian by $(-\Delta_p)^\sigma = -\mathcal{L}_{\sigma,p}$, where, if $1<p<\infty$ and $0<\sigma <1$,  
$$(\mathcal{L}_{\sigma,p}u)(x) := 2\int_{\mathbb{R}^N} \frac{\vert u(x) - u(y) \vert^{p-2}(u(x) - u(y))}{\vert x-y \vert^{N+\sigma p}} \dd y.
$$

As argued in \cite{Lind}, this expression appears formally as follows
\begin{align*}
\left\langle (-\Delta_p)^\sigma u, \varphi \right\rangle &= \int_{\mathbb{R}^{N}}\int_{\mathbb{R}^{N}}\frac{\vert u(x) -u(y) \vert^{p-2}(u(x) - u(y))(\varphi(x) - \varphi(y))}{\vert x-y \vert^{N+\sigma p}} \dd x \dd y\\
&=\int_{\mathbb{R}^{N}} \varphi(x) \left(\int_{\mathbb{R}^{N}}\frac{\vert u(x) - u(y) \vert^{p-2}(u(x) - u(y))}{\vert x-y \vert^{N+\sigma p}} \dd y \right) \dd x \\
&\quad -\int_{\mathbb{R}^{N}} \varphi(y) \left(\int_{\mathbb{R}^{N}}\frac{\vert u(x) - u(y) \vert^{p-2}(u(x) - u(y))}{\vert x-y \vert^{N+\sigma p}} \dd x \right) \dd y\\
&=\int_{\mathbb{R}^{N}} \varphi(x) (\mathcal{L}_{\sigma,p}u) (x) \dd x, \ \ \ \forall \varphi \in W_0^{\sigma,p}(\Omega).
\end{align*}

If $p=\infty$, we define
$$\mathcal{L}_{\sigma,\infty} = \mathcal{L}^{+}_{\sigma,\infty} + \mathcal{L}^{-}_{\sigma,\infty},
$$
where
$$(\mathcal{L}^{+}_{\sigma,\infty}u)(x) =\sup_{y \in \mathbb{R}^{N}\setminus\{x\}}\frac{u(x) - u(y)}{\vert x- y \vert^{\sigma}} \quad\text{and}\quad(\mathcal{L}^{-}_{\sigma,\infty}u)(x) = \inf_{y \in \mathbb{R}^{N}\setminus\{x\}} \frac{u(x) - u(y)}{\vert x- y \vert^{\sigma}},
$$
see Chambolle,  Lindgren and Monneau \cite{Chambolle}, where the concept was introduced, but also \cite{Lind}. Observe that, since $\mathcal{L}_{\sigma,\infty}$ is not sufficiently smooth, its solutions must be interpreted in the viscosity sense.

We recall the definition of a solution in the viscosity sense by considering the problem
\begin{equation} \label{Pvis}
\left\{\begin{array}{ll}
\mathcal{L}_{\sigma,p}u = 0 & {\rm in} \ \ \Omega,\\
u= 0 & {\rm in} \ \mathbb{R}^N\setminus\Omega,\\
\end{array}\right.
\end{equation}
for all $p \in (1,\infty]$.
\begin{definition}
Let $u \in C(\mathbb{R}^{N})$ satisfy $u=0$ in $\mathbb{R}^{N}\setminus\Omega$. The function $u$ is a \textbf{viscosity supersolution} of \eqref{Pvis} if
$$(\mathcal{L}_{\sigma,p}\varphi)(x_0) \leq 0
$$
for each pair $(x_0,\varphi) \in \Omega \times C_0^1(\mathbb{R}^N)$ such that
$$\varphi(x_0) = u(x_0) \qquad\text{and}\qquad\varphi(x) \leq u(x) \ \ \forall x \in \mathbb{R}^{N}.
$$
	
On its turn, $u$ is a \textbf{viscosity subsolution} of \eqref{Pvis} if
$$(\mathcal{L}_{\sigma,p}\varphi)(x_0)\geq 0
$$
for all pair $(x_0,\varphi) \in \Omega \times C_0^1(\mathbb{R}^N)$ such that
$$\varphi(x_0) = u(x_0) \ \ \text{e} \ \ \varphi(x) \geq u(x) \ \ \forall x \in \mathbb{R}^{N}.
$$
	
The function $u$ is a  \textbf{viscosity solution} to the problem \eqref{Pvis} if $u$ is both a viscosity super- and subsolution to problem \eqref{Pvis}.
\end{definition}

Finally, in Section \ref{proofthm4}, we prove that the solutions $u_\infty$ and $v_\infty$ given by Theorem \ref{thm3} are viscosity solutions. 
\begin{theorem}\label{thm4}
Let $1<s\leq t < 1$. Then, the functions $u_\infty$ and $v_\infty$, given by Theorem \ref{thm3}, are viscosity  solutions of the system
\begin{equation} \label{Pvis3}
\left\{\begin{array}{llll}
\max\left\{\mathcal{L}_{s,\infty} u, \mathcal{L}^{-}_{s,\infty} u - \Lambda_{1,\infty} \vert u(x) \vert^{\theta} \vert v_\infty(x_0)\vert^{1-\theta}\right\}= 0 & {\rm in} \ \ \Omega,\\
\mathcal{L}_{t,\infty} v = 0 & {\rm in} \ \ \Omega \setminus \{x_0\},\\
u = v = 0 & {\rm in} \ \mathbb{R}^N\setminus\Omega,\\
v(x_0) =v_\infty(x_0).
\end{array}
\right.
\end{equation}
\end{theorem}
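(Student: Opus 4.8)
The strategy is a stability argument: I would pass to the limit $p_n\to\infty$ in the \emph{viscosity} formulation of \eqref{P1}, combining the uniform convergence $(u_{p_n},v_{p_n})\to(u_\infty,v_\infty)$ of Theorem \ref{thm3} with the asymptotic behaviour of the operators $\mathcal{L}_{\sigma,p}$ on smooth functions as $p\to\infty$. The first step is to recall that, for each $n$, the minimizers of Theorem \ref{thm1} are not merely weak but also viscosity solutions of \eqref{P1} (regularity for the fractional $p$-Laplacian, as in \cite{Lind,Del}). Since $u_{p_n},v_{p_n}>0$, $\lambda=\Lambda_1(p_n)$, $(-\Delta_p)^\sigma=-\mathcal{L}_{\sigma,p}$, and $\delta_{x_0}$ is supported at $x_0$, this says that $v_{p_n}$ is a viscosity solution of $\mathcal{L}_{t,p_n}v_{p_n}=0$ in $\Omega\setminus\{x_0\}$ and $u_{p_n}$ is a viscosity solution of $\mathcal{L}_{s,p_n}u_{p_n}=-f_{p_n}$ in $\Omega$, with $f_{p_n}:=\Lambda_1(p_n)\alpha(p_n)\,u_{p_n}^{\alpha(p_n)-1}v_{p_n}(x_0)^{\beta(p_n)}\ge 0$. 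Given a test function $\varphi\in C_0^1(\mathbb{R}^N)$ touching $u_\infty$ from above (the from-below case is analogous) at an interior point $x^*$, the standard perturbation argument — make the contact strict on a ball, use the uniform convergence to locate the contact point $x_n\to x^*$ of a vertical translate of $\varphi$ against $u_{p_n}$, and modify $\varphi$ away from that ball so that it still dominates $u_{p_n}$ on all of $\mathbb{R}^N$ — produces $\mathcal{L}_{s,p_n}\varphi(x_n)\ge -f_{p_n}(x_n)$ (resp.\ $\le$), and likewise for $v$.

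The equation for $v_\infty$ is the easy half, having no eigenvalue term. Writing $\mathcal{L}_{\sigma,p}\psi(x)=2\bigl(I_p^{+}(x)-I_p^{-}(x)\bigr)$, where $I_p^{\pm}(x)\ge 0$ are the positive and negative parts of the defining integral, the Laplace-type asymptotics of the fractional $p$-Laplacian on $C^1$ functions (see \cite{Lind,Chambolle,MJRD}) give $\bigl(I_{p_n}^{\pm}(x_n)\bigr)^{1/(p_n-1)}\to\bigl(\mathcal{L}^{\pm}_{\sigma,\infty}\psi(x^*)\bigr)_{\pm}$ for $x_n\to x^*$; in particular the sign of $\mathcal{L}_{t,p_n}\psi(x_n)$ is eventually that of $\mathcal{L}_{t,\infty}\psi(x^*)=\mathcal{L}^{+}_{t,\infty}\psi(x^*)+\mathcal{L}^{-}_{t,\infty}\psi(x^*)$ whenever the latter is nonzero. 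Feeding the viscosity inequalities $\mathcal{L}_{t,p_n}\psi(x_n)\le 0$ (resp.\ $\ge 0$) for $x^*\in\Omega\setminus\{x_0\}$ into this dichotomy yields $\mathcal{L}_{t,\infty}\psi(x^*)\le 0$ (resp.\ $\ge 0$), i.e.\ $v_\infty$ is a viscosity solution of $\mathcal{L}_{t,\infty}v=0$ in $\Omega\setminus\{x_0\}$; the boundary condition $v_\infty=0$ in $\mathbb{R}^N\setminus\Omega$ and the identity $v(x_0)=v_\infty(x_0)$ are immediate from the uniform convergence.

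For the equation on $u_\infty$ the new ingredient is the right-hand side. By Theorem \ref{thm3}$(i)$--$(ii)$ and $(h_2)$ one has $\Lambda_1(p_n)^{1/(p_n-1)}\to\Lambda_{1,\infty}$, $\alpha(p_n)^{1/(p_n-1)}\to1$, $(\alpha(p_n)-1)/(p_n-1)\to\theta$, $\beta(p_n)/(p_n-1)\to1-\theta$, while $u_{p_n}(x_n)\to u_\infty(x^*)=\varphi(x^*)\ge 0$ and $v_{p_n}(x_0)\to v_\infty(x_0)>0$, so that
\[
f_{p_n}(x_n)^{1/(p_n-1)}\ \longrightarrow\ \Lambda_{1,\infty}\,|\varphi(x^*)|^{\theta}\,|v_\infty(x_0)|^{1-\theta}.
\]
Combining $\mathcal{L}_{s,p_n}\varphi(x_n)\ge -f_{p_n}(x_n)$ (resp.\ $\le$) with the $(p_n-1)$-th root asymptotics of $\mathcal{L}_{s,p_n}\varphi(x_n)$, and passing to the limit, reproduces precisely the maximum in \eqref{Pvis3}: for $\varphi$ touching $u_\infty$ from below one gets $\mathcal{L}_{s,\infty}\varphi(x^*)\le 0$ and $\mathcal{L}^{-}_{s,\infty}\varphi(x^*)-\Lambda_{1,\infty}|\varphi(x^*)|^{\theta}|v_\infty(x_0)|^{1-\theta}\le 0$, hence $\max\{\cdots\}\le 0$; for $\varphi$ touching from above at least one of these two quantities is $\ge 0$, hence $\max\{\cdots\}\ge 0$. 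Together with $u_\infty\ge 0$ and $u_\infty=0$ in $\mathbb{R}^N\setminus\Omega$ (Theorem \ref{thm3} and uniform convergence), this is the first line of \eqref{Pvis3}, and the theorem follows.

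The delicate point — and the step I expect to be the main obstacle — is the Laplace-type analysis of $\mathcal{L}_{\sigma,p}\varphi$: one must prove $\bigl(I^{\pm}_{p_n}(x_n)\bigr)^{1/(p_n-1)}\to\bigl(\mathcal{L}^{\pm}_{\sigma,\infty}\varphi(x^*)\bigr)_{\pm}$ \emph{locally uniformly in the moving base point} $x_n\to x^*$, so that it interfaces cleanly with the $(p_n-1)$-th root of $f_{p_n}(x_n)$, and, on the subsolution side, one must keep careful track of which of the two competing terms in the maximum actually survives in the limit. The facts that make this work are the continuity of $x\mapsto\mathcal{L}^{\pm}_{\sigma,\infty}\varphi(x)$ for $\varphi\in C_0^1(\mathbb{R}^N)$ and the strict positivity of the kernel $|x-y|^{-N-\sigma p}$ near any near-maximizer of the Hölder quotient, exactly as in \cite{Lind,Chambolle}.
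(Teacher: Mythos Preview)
Your proposal is correct and follows essentially the same route as the paper: both argue that the weak minimizers $(u_{p_n},v_{p_n})$ are viscosity solutions (the paper isolates this as Proposition~\ref{visc}), run the standard stability argument with a perturbed test function touching at $z_n\to z_0$, split $\mathcal{L}_{\sigma,p_n}\varphi(z_n)$ into its positive and negative parts, and invoke the Laplace--type limit $\bigl(A_{p_n,\sigma}\bigr)^{1/(p_n-1)}\to\mathcal{L}^{+}_{\sigma,\infty}\varphi$, $\bigl(B_{p_n,\sigma}\bigr)^{1/(p_n-1)}\to-\mathcal{L}^{-}_{\sigma,\infty}\varphi$ (the paper cites \cite[Lemma~3.9]{GGA} and \cite[Lemma~6.1]{Ferreira} for precisely the locally-uniform-in-the-base-point statement you flag as the delicate step). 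The treatment of the right-hand side for $u_\infty$ via $\bigl(f_{p_n}(x_n)\bigr)^{1/(p_n-1)}\to\Lambda_{1,\infty}|\varphi(x^*)|^{\theta}|v_\infty(x_0)|^{1-\theta}$ is identical in both.
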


\section{Some remarks on the proofs of Theorems \ref{thm1} and \ref{thm2}}\label{remarks}

Since the proofs of Theorems \ref{thm1} and \ref{thm2} are simple adaptations of that one given in \cite{MJRD}, we only sketch them for the convenience of the reader. For details, see \cite[Theorem 1 and Theorem 2]{MJRD}.
 
\noindent\textit{Sketch of proof of Theorem \ref{thm1}.} Estimating the denominator in the definition of $Q_{s,t,p}$, the inequalities of Young and Sobolev imply that $\Lambda_1>0$. By defining 
\begin{align*}U_n(x) &= \frac{ u_n(x)}{\left(\displaystyle \int_{\Omega} \vert u_n \vert^{\alpha(p)} \dd x\right)^{\frac{1}{p}} \vert v_n(x_0)\vert^{\frac{\beta(p)}{p}}}
\intertext{and}
V_n(x) &=\frac{v_n(x)}{\left(\displaystyle \int_{\Omega} \vert u_n \vert^{\alpha(p)} \dd x\right)^{\frac{1}{p}}\vert v_n(x_0)\vert^{\frac{\beta(p)}{p}}},
\end{align*}
we have $(U_n,V_n) \in X_{s,p}(\Omega)$ satisfy $
\left(\displaystyle\int_\Omega \vert U_n(x) \vert^{\alpha(p)}\dd x\right) |V_n(x_0)|^{\beta(p)} = 1$. Furthermore,
$$\lim_{n \to \infty} Q_{s,t,p}(U_n,V_n) =\lim_{n \to \infty} Q_{s,t,p}(u_n,v_n) = \Lambda_{1}(s,p),
$$
guaranteeing the existence of $u_p,v_p\in W^{s,p}(\Omega)$ such that 
\begin{equation*}
	\left(\int_{\Omega} \vert u_p \vert^{\alpha(p)} \dd x\right)\vert  v_p(x_0) \vert^{\beta(p)} = 1.
\end{equation*} and
$$Q_{s,t,p}(u_p, u_p) = \Lambda_{1}(p).
$$

For any $(\phi,\psi)\in X_{s,t,p}(\Omega)$, considering
$$g(t) =Q_{s,t,p}(u_p + t\phi,v_p + t\psi),$$ 
it follows the existence of $t_0>0$ such that $g(t)>g(0)=\Lambda_1(p)$. Since $g\in C^1((-t_0,t_0),\mathbb{R})$m we have $g'(0) = 0$, from what follows that $(u_p,v_p)$ is a weak solution to system \eqref{P1}. An argument similar \cite[Lemma 22]{Lind} proves that $u_p > 0 $ and $v_p > 0$ in $\Omega$,
showing that $\Lambda_{1}(s,p)$ is a principal eigenvalue to system \eqref{P1}. \hfill$\Box$\vspace*{.3cm}

\noindent\textit{Sketch of proof of Theorem \ref{thm2}.} In order to prove ($i$), suppose that $(u_p,v_p)\to (u,v)\in X_0(\Omega)$. Passing to a subsequence, we assume that $\displaystyle\lim_{p\to\infty}F_p(u_p,v_p)=\displaystyle\liminf_{p\to\infty}F_p(u_p,v_p)$. It is not difficult to discard the case $(u,v) \notin X^{*}_{s,t,\infty}(\Omega) \cap S_{\infty}$. So, we consider the case $(u,v) \in X^*_{s,t,\infty}(\Omega) \cap S_{\infty}$, which implies $\Vert u \Vert_\infty^{\theta}\vert  v(x_0) \vert^{1-\theta} = 1$. We can assume that $F_{p}(u_p,v_p) \leq C<\infty$, since otherwise ($i$) is valid. So, for $p$ large enough, we have $(u_p,v_p) \in S_{p}$ and, if $k > \frac{N}{s}$, then 

\begin{fleqn}
\begin{align*}
\left(\int_{\Omega}\int_{\Omega}\frac{\vert u_{p}(x) - u_p(y) \vert^k}{\vert x-y \vert^{\left(\frac{N}{p} + s\right)k}} + \frac{\vert v_{p}(x) - v_p(y) \vert^k }{\vert x-y \vert^{\left(\frac{N}{p} + t\right)k}} \dd x \dd  y\right)^{\frac{1}{k}}
\end{align*}
\end{fleqn} 
$$\leq 2^{\frac{1}{k}}\vert \Omega \vert^{2\left(\frac{1}{k} - \frac{1}{p}\right)} p^{\frac{1}{p}} \left[\frac{1}{p}[u_p]_{s,p}^p + \frac{1}{p} [v_p]_{t,p}^p\right]^{\frac{1}{p}}.
$$
Thus, 
\begin{align*}
F_{p}(u_p,v_p) &= Q_{s,t,p}(u_p,v_p) = \left[\frac{1}{p}[u_p]_{s,p}^p + \frac{1}{p} [v_p]_{t,p}^p\right]^{\frac{1}{p}}\\
&\geq 2^{-\frac{1}{k}}\vert \Omega \vert^{2\left(\frac{1}{p} - \frac{1}{k}\right)} p^{-\frac{1}{p}}\left(\int_{\Omega} \int_{\Omega} \frac{\vert u_{p}(x) - u_p(y) \vert^k}{\vert x-y \vert^{\left(\frac{N}{p} + s\right)k}} + \frac{\vert v_{p}(x) - v_p(y) \vert^k }{\vert x-y \vert^{\left(\frac{N}{p} + t\right)k}} \dd x \dd  y\right)^{\frac{1}{k}}.
\end{align*}
As $p\to \infty$, results from the uniform convergence and Fatou's Lemma that
\begin{align*}
\liminf_{p\to \infty} F_{p}(u_p,v_p) 
& \geq 2^{-\frac{1}{k}} \vert \Omega \vert^{-\frac{2}{k}} \left(\int_{\Omega}\int_{\Omega}\frac{\vert u(x) - u(y) \vert^k}{\vert x-y \vert^{sk}} + \frac{\vert v(x) - v(y) \vert^k }{\vert x-y \vert^{tk}} \dd x \dd  y\right)^{\frac{1}{k}}.
\end{align*}
Making $k \to \infty$, we obtain
\begin{align}\label{limit}
\liminf_{p\to \infty} F_{p}(u_p,v_p) &\geq \max\left\{\vert u\vert_s, \vert v \vert_t\right\} = F_{\infty}(u,v),
\end{align}
concluding the proof of ($i$).

Now we deal with the second claim. Take any $(u,v) \in X_0(\Omega)$ and initially suppose that $(u,v) \notin X^{*}_{s,t,\infty}(\Omega) \cap S_{\infty}$. Then $F_{s,\infty}(u,v)=\infty$.  Consider then a sequence of values $p\to\infty$ and, for any $p\in \left(\frac{N}{s}, \infty\right)$ in the sequence, define $u_p:= u$ and $v_p:= v$. Of course we have $(u_p,v_p) \to (u,v)$ as $p\to\infty$ in $X_0(\Omega)$. It is not difficult to discard the cases $\left(\displaystyle\int_{\Omega} \vert u_p \vert^{\alpha(p)} \dd x\right)\vert  v_p(x_0) \vert^{\beta(p)}\neq 1$. If, however, $(u,v) \in X^{*}_{s,t,\infty}(\Omega) \cap S_{\infty}$, consider then a sequence of values $p\to\infty$ and, for any $p\in \left(\frac{N}{s}, \infty\right)$ in the sequence, define 
$$U_p(x) = \frac{u(x)}{\left(\displaystyle \int_{\Omega} \vert u \vert^{\alpha(p)} \dd x\right)^{\frac{1}{p}} \vert v(x_0)\vert^{\frac{1}{p}}} \qquad\text{and} \qquad V_p(x) =\frac{v(x)}{\left(\displaystyle \int_{\Omega} \vert u \vert^{\alpha(p)} \dd x\right)^{\frac{1}{p}}\vert v(x_0)\vert^{\frac{\beta(p)}{p}}}.$$

Then $(U_p,V_p) \in S_{p}$ and 
\begin{align*}
\limsup_{p\to \infty} F_{p}(U_p,V_p) = \max\bigg\{\vert u \vert_s, \vert v \vert_t\bigg\} = F_{\infty}(u,v),
\end{align*}
completing the proof of ($ii$). $\hfill\Box$

\section{Proof of Theorem \ref{thm3}}
Let us denote
\[R =\max_{x \in \overline{\Omega}} \text{dist}(x, \mathbb{R}^{N}\setminus \Omega) = \Vert \text{dist}(.,\mathbb{R}^{N}\setminus \Omega) \Vert_{L^{\infty}(\Omega)}.
\]
For a fixed $x_1 \in \Omega$ we consider the functions  $\phi_{R}\colon \overline{B_{R}(x_1)} \rightarrow [0,R]$ and $\psi_{R}\colon \overline{B_{R}(x_0)} \rightarrow [0,R]$ given by 
$$\phi_{R}(x) = R^{(\theta-1)t - s\theta}\left(R-\vert x-x_1 \vert \right)^{s}_{+} \quad \text{and} \quad\psi_R(x) = R^{(\theta-1)t - s\theta}\left(R-\vert x-x_0 \vert \right)^{t}_{+}.
$$

Of course we have $\phi_R \in C_0^{0,s}(\overline{B_{R}(x_1)}$ and $\psi_R \in C_0^{0,s}(\overline{B_{R}(x_0)}$. Furthermore,
$$\Vert \phi_R \Vert_{\infty} = R^{(\theta-1)(t-s)}, \quad\vert \psi_R(x_0)| = R^{\theta(t-s)} \quad \text{and} \quad \vert\phi_R \vert_s = \vert \psi_R \vert_s = R^{(\theta-1)t - s\theta}.
$$

We can extend $\phi_R$ and $\psi_R$ to $\overline{\Omega}$ by putting $\phi_R = 0$ in $\mathbb{R}^{N}\setminus \overline{B_{R}(x_1)}$ and $\psi_R = 0$ in $\mathbb{R}^{N}\setminus \overline{B_{R}(x_0)}$ to that $\phi_R, \psi_R \in C_0^{0,s}(\overline{\Omega})$, maintaining its $s$-Hölder norm. Additionally, we still have $\phi_R, \psi_R \in W_{0}^{1,m}(\Omega) \hookrightarrow W_{0}^{s,m}(\Omega)$ for all $s \in (0,1)$ and $m \geq 1$. For details, see \cite{GGA,Lind}. 

\begin{lemma}\label{prop1}
For any fixed $0<s\leq t<1$ we have
$$\Lambda_{1,\infty} = \inf_{(u,v) \in X_{s,t,\infty}^*(\Omega)} \frac{\max\big\{\vert u \vert_s, \vert v \vert_t \big\}}{\Vert u \Vert_{\infty}^{\theta} \vert  v(x_0) \vert_{\infty}^{1-\theta}} = \frac{1}{R^{s\theta + (1-\theta)t}}.
$$
\end{lemma}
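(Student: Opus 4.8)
The plan is to establish the identity by proving two inequalities, the upper bound via an explicit test pair and the lower bound via a geometric estimate of Hölder quotients against the $L^\infty$-norm and point value.

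\textbf{Upper bound.} First I would use the functions $\phi_R$ and $\psi_R$ constructed just before the statement. The pair $(\phi_R,\psi_R)$ lies in $X_{s,t,\infty}^*(\Omega)$ since $\Vert\phi_R\Vert_\infty>0$ and $\psi_R(x_0)=R^{\theta(t-s)}\neq 0$. Plugging it into the quotient and using the computed values $\vert\phi_R\vert_s=\vert\psi_R\vert_s=R^{(\theta-1)t-s\theta}$ together with $\Vert\phi_R\Vert_\infty^\theta\vert\psi_R(x_0)\vert^{1-\theta}=R^{\theta(\theta-1)(t-s)}R^{(1-\theta)\theta(t-s)}=R^0=1$, one finds the quotient equals $\max\{\vert\phi_R\vert_s,\vert\psi_R\vert_t\}$. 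Here I must be slightly careful: $\psi_R$ is built with exponent $t$, so $\vert\psi_R\vert_t$, not $\vert\psi_R\vert_s$, is what appears; since $\psi_R(x)=R^{(\theta-1)t-s\theta}(R-\vert x-x_0\vert)_+^t$ the $t$-Hölder seminorm of $(R-\vert\cdot\vert)_+^t$ on $\overline{B_R}$ equals $1$, so $\vert\psi_R\vert_t=R^{(\theta-1)t-s\theta}$ as well, and the normalization still gives $1$. Hence $\Lambda_{1,\infty}\le R^{(\theta-1)t-s\theta}=R^{-(s\theta+(1-\theta)t)}$.

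\textbf{Lower bound.} This is the main obstacle. Take any $(u,v)\in X_{s,t,\infty}^*(\Omega)$; after normalizing I may assume $\Vert u\Vert_\infty^\theta\vert v(x_0)\vert^{1-\theta}=1$, and I must show $\max\{\vert u\vert_s,\vert v\vert_t\}\ge R^{-(s\theta+(1-\theta)t)}$. The key geometric fact is that for $w\in W_0^{\sigma,\infty}(\Omega)$ (so $w=0$ outside $\Omega$) and any $x\in\Omega$, choosing $y\notin\Omega$ to be the nearest exterior point gives $\vert w(x)\vert=\vert w(x)-w(y)\vert\le\vert w\vert_\sigma\,\vert x-y\vert^\sigma=\vert w\vert_\sigma\,\mathrm{dist}(x,\mathbb{R}^N\setminus\Omega)^\sigma\le\vert w\vert_\sigma\,R^\sigma$. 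Applying this to $u$ with $\sigma=s$ (taking the supremum over $x$) yields $\Vert u\Vert_\infty\le R^s\vert u\vert_s$, and applying it to $v$ with $\sigma=t$ at the specific point $x_0$ yields $\vert v(x_0)\vert\le R^t\vert v\vert_t$. Therefore
\begin{align*}
1=\Vert u\Vert_\infty^\theta\vert v(x_0)\vert^{1-\theta}\le\big(R^s\vert u\vert_s\big)^\theta\big(R^t\vert v\vert_t\big)^{1-\theta}
= R^{s\theta+(1-\theta)t}\,\vert u\vert_s^\theta\,\vert v\vert_t^{1-\theta}\le R^{s\theta+(1-\theta)t}\,\max\{\vert u\vert_s,\vert v\vert_t\},
\end{align*}
which rearranges to $\max\{\vert u\vert_s,\vert v\vert_t\}\ge R^{-(s\theta+(1-\theta)t)}$. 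Taking the infimum over all admissible $(u,v)$ gives $\Lambda_{1,\infty}\ge R^{-(s\theta+(1-\theta)t)}$.

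Combining the two bounds yields $\Lambda_{1,\infty}=R^{-(s\theta+(1-\theta)t)}$, and the first equality in the statement is simply the definition of $\Lambda_{1,\infty}$ as that infimum. I expect the only genuinely delicate point to be the claim that the nearest exterior point can be reached while staying inside $\overline{B_R}$ structure — i.e., the inequality $\mathrm{dist}(x,\mathbb{R}^N\setminus\Omega)\le R$ for all $x\in\Omega$, which is immediate from the definition $R=\Vert\mathrm{dist}(\cdot,\mathbb{R}^N\setminus\Omega)\Vert_\infty$ — and the verification that the test pair actually attains the value, where one should double-check that the $s$- and $t$-Hölder seminorms of the truncated power functions are exactly $1$ on balls of radius $R$, a one-variable computation using that $r\mapsto(R-r)_+^\sigma$ is $\sigma$-Hölder with constant $1$.
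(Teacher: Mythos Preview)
Your proof is correct and follows essentially the same approach as the paper: the upper bound via the explicit test pair $(\phi_R,\psi_R)$, and the lower bound by comparing $\Vert u\Vert_\infty$ and $\vert v(x_0)\vert$ to the nearest exterior point to obtain $\Vert u\Vert_\infty\le R^s\vert u\vert_s$ and $\vert v(x_0)\vert\le R^t\vert v\vert_t$, then bounding the weighted product by the max. The only cosmetic difference is that you normalize the denominator to $1$ first, whereas the paper works directly with the quotient.
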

\begin{proof}We note that we have
$$\Vert \phi_R \Vert_{\infty}^{\theta} \vert \psi_R(x_0) \vert^{1-\theta} = R^{\theta(\theta-1)(t-s) + \theta(1-\theta)(t-s)} = 1
$$
and therefore
$$\Lambda_{1,\infty} = \inf_{(u,v) \in X_{s,t,\infty}^*(\Omega)}\frac{\max\big\{\vert u \vert_s, \vert v \vert_t \big\}}{\Vert u \Vert_{\infty}^{\theta} \vert  v(x_0) \vert_{\infty}^{1-\theta}} \leq \frac{\max\big\{\vert \phi_R \vert_s, \vert \psi_R \vert_t \big\}}{\Vert \phi_R \Vert_{\infty}^{\theta} \vert  \psi_R(x_0) \vert_{\infty}^{1-\theta}} = \frac{1}{R^{s\theta + (1-\theta)t}}.
$$
	
Also note that, given $(u,v) \in X_{s,t,p}^{*}(\Omega)$, then $u = 0 = v$ in $\overline{\Omega}$. Since $u$ is continuous, there exists $x_1 \in \overline{\Omega}$ such that
$$\Vert u \Vert_{\infty} = \vert u(x_1) \vert.
$$
	
The compactness of $\overline{\Omega}$ guarantees the existence of $y_{x_0}, y_{x_1} \in \partial \Omega$ such that
$$\vert x_0 - y_{x_0} \vert = \text{dist}(x_0, \mathbb{R}^{N}\setminus \Omega) \quad \text{and} \quad  \vert x_1 - y_{x_1} \vert = \text{dist}(x_1, \mathbb{R}^{N}\setminus\Omega).
$$
	
Thus, since $u(y_{x_1}) = v(y_{x_0}) = 0$, it follows
$$\Vert u \Vert_{\infty}^{\theta} = \vert u(x_1) - u(y_{x_1}) \vert^{\theta} \leq \vert u \vert_s^{\theta}\vert x_1 - y_{x_1} \vert^{s\theta} \leq \vert u \vert_s^{\theta}\,R^{s\theta}.
$$
On the other hand,
$$\vert v(x_0) \vert^{1-\theta} = \vert v(x_0) - v(y_{x_0}) \vert^{1-\theta} \leq \vert v \vert_t^{1-\theta}\vert x_0 - y_{x_0} \vert^{t(1-\theta)} \leq \vert v \vert_t^{1-\theta}\,R^{t(1-\theta)}.
$$
	
So, for any $(u,v) \in X^{*}_{s,t,p}(\Omega)$, we have
\begin{align*}\frac{1}{R^{s\theta + t(1-\theta)}} = \frac{1}{R^{s\theta}\,R^{(1-\theta)t}} &\leq \frac{\vert u \vert_s^{\theta} \vert v \vert_t^{1-\theta}}{\Vert u \Vert_{\infty}^\theta \vert  v(x_0) \vert^{1-\theta}} \leq \frac{\left(\max\big\{\vert u \vert_s, \vert v \vert_t \big\}\right)^{\theta} \left(\max\big\{\vert u \vert_s, \vert v \vert_t \big\}\right)^{1-\theta}}       {\Vert u \Vert_{\infty}^{\theta} \vert  v(x_0) \vert^{1-\theta}}\\& = \frac{\max\big\{\vert u \vert_s, \vert v \vert_t \big\}} {\Vert u \Vert_{\infty}^{\theta} \vert  v(x_0) \vert^{1-\theta}}.
\end{align*}
	
Therefore, 
$$\Lambda_{1,\infty} = \inf_{(u,v) \in X_{s,t,\infty}^*(\Omega)}\frac{\max\big\{\vert u \vert_s, \vert v \vert_t \big\}}{\Vert u \Vert_{\infty}^{\theta} \vert  v(x_0) \vert_{\infty}^{1-\theta}} \geq \frac{1}{R^{s\theta + (1-\theta)t}},
$$
concluding the proof.
\end{proof}

The next result is pivotal in our analysis of the asymptotic behavior of solutions in problems driven by the fractional $p$-Laplacian. 
\begin{lemma}\label{lemmaasymp}
Let $u \in C_0^{0,\sigma}(\overline{\Omega})$ be extended as zero outside $\Omega$. If $u \in W^{\sigma,q}(\Omega)$ for some $q>1$, then $u \in W_{0}^{\sigma,p}(\Omega)$ for all $p\geq q$ and
$$\lim_{p\to \infty} [u]_{\sigma,p} = \vert u \vert_\sigma.
$$
\end{lemma}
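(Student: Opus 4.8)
The plan is to establish the two assertions separately: membership $u\in W_{0}^{\sigma,p}(\Omega)$ for every $p\ge q$, and then the limit $[u]_{\sigma,p}\to\vert u\vert_\sigma$. Throughout, $u$ is extended by zero to $\mathbb{R}^N$, so it is continuous with compact support.

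\textbf{Step 1 (membership).} First I would upgrade the hypothesis $u\in W^{\sigma,q}(\Omega)$ to finiteness of the Gagliardo integral over all of $\mathbb{R}^N\times\mathbb{R}^N$. The piece over $\Omega\times\Omega$ equals $\Vert u\Vert_{W^{\sigma,q}(\Omega)}^q<\infty$ by assumption; the piece over $\Omega\times(\mathbb{R}^N\setminus\Omega)$ is controlled using $\vert u(x)\vert=\vert u(x)-u(y_x)\vert\le\vert u\vert_\sigma\,\text{dist}(x,\partial\Omega)^\sigma$ for a nearest boundary point $y_x$, together with $\int_{\mathbb{R}^N\setminus\Omega}\vert x-y\vert^{-N-\sigma q}\,\dd y\le C\,\text{dist}(x,\partial\Omega)^{-\sigma q}$; the product is bounded and $\Omega$ has finite measure. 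Hence $[u]_{\sigma,q}<\infty$. For $p\ge q$ I then split $[u]_{\sigma,p}^p$ over $\{\vert x-y\vert<1\}$ and $\{\vert x-y\vert\ge1\}$. On the first region the Hölder bound gives $\vert u(x)-u(y)\vert^{p-q}\le\vert u\vert_\sigma^{p-q}\vert x-y\vert^{\sigma(p-q)}$, so the integrand is $\le\vert u\vert_\sigma^{p-q}\,\vert u(x)-u(y)\vert^q\vert x-y\vert^{-N-\sigma q}$ and the integral is $\le\vert u\vert_\sigma^{p-q}[u]_{\sigma,q}^q<\infty$. On the second region $\vert u(x)-u(y)\vert^p\le(2\Vert u\Vert_\infty)^p$ while $u(x)-u(y)\ne0$ forces $x\in\Omega$ or $y\in\Omega$, so the remaining integral $\iint_{\{\vert x-y\vert\ge1\}}\vert x-y\vert^{-N-\sigma p}\,\dd x\,\dd y$ (restricted to that support) is finite. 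Since also $u\in L^p(\mathbb{R}^N)$ and $u=0$ off $\Omega$, we get $u\in W_{0}^{\sigma,p}(\Omega)$.

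\textbf{Step 2 (upper bound).} Set $L:=\vert u\vert_\sigma$. If $L=0$ then $u$ is a constant vanishing outside $\Omega$, hence $u\equiv0$ and there is nothing to prove; so assume $L>0$, which also forces $[u]_{\sigma,q}>0$. For $p>q$ the pointwise inequality $\vert u(x)-u(y)\vert/\vert x-y\vert^\sigma\le L$ yields the interpolation
\[
[u]_{\sigma,p}^p=\int_{\mathbb{R}^N}\int_{\mathbb{R}^N}\Big(\frac{\vert u(x)-u(y)\vert}{\vert x-y\vert^\sigma}\Big)^{p-q}\frac{\vert u(x)-u(y)\vert^q}{\vert x-y\vert^{N+\sigma q}}\,\dd x\,\dd y\ \le\ L^{\,p-q}\,[u]_{\sigma,q}^q,
\]
so $[u]_{\sigma,p}\le L^{1-q/p}[u]_{\sigma,q}^{q/p}$, and letting $p\to\infty$ gives $\limsup_{p\to\infty}[u]_{\sigma,p}\le L$. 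This interpolation against the fixed finite norm $[u]_{\sigma,q}$ — which circumvents the fact that $\vert x-y\vert^{-N}\,\dd x\,\dd y$ is not a finite measure — is the one point that requires genuine care; it is also where the extra hypothesis $u\in W^{\sigma,q}(\Omega)$ is essential.

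\textbf{Step 3 (lower bound).} Fix $\varepsilon\in(0,L)$ and pick $x_0\ne y_0$ with $\vert u(x_0)-u(y_0)\vert>(L-\varepsilon)\vert x_0-y_0\vert^\sigma$. Since $(x,y)\mapsto\vert u(x)-u(y)\vert/\vert x-y\vert^\sigma$ is continuous off the diagonal, there is $\delta\in(0,\vert x_0-y_0\vert/4)$ with $\vert u(x)-u(y)\vert>(L-\varepsilon)\vert x-y\vert^\sigma$ for all $(x,y)\in B_\delta(x_0)\times B_\delta(y_0)$; on this set $\vert x_0-y_0\vert/2\le\vert x-y\vert\le 2\vert x_0-y_0\vert$, so $m:=\int_{B_\delta(x_0)}\int_{B_\delta(y_0)}\vert x-y\vert^{-N}\,\dd x\,\dd y\in(0,\infty)$. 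Restricting the defining integral to this set gives $[u]_{\sigma,p}^p\ge(L-\varepsilon)^p m$, hence $[u]_{\sigma,p}\ge(L-\varepsilon)m^{1/p}\to L-\varepsilon$ as $p\to\infty$; since $\varepsilon$ was arbitrary, $\liminf_{p\to\infty}[u]_{\sigma,p}\ge L$. Combining with Step 2 yields $\lim_{p\to\infty}[u]_{\sigma,p}=L=\vert u\vert_\sigma$.
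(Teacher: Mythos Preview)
Your proof is correct. The paper does not supply its own argument for this lemma; it simply cites \cite[Lemma 7]{GGR}. Your write-up is therefore a self-contained substitute for that citation.

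A minor redundancy: the splitting in Step~1 into $\{\vert x-y\vert<1\}$ and $\{\vert x-y\vert\ge1\}$ is unnecessary, since the interpolation inequality you prove in Step~2,
\[
[u]_{\sigma,p}^p\le \vert u\vert_\sigma^{\,p-q}\,[u]_{\sigma,q}^q,
\]
holds pointwise on all of $\mathbb{R}^N\times\mathbb{R}^N$ and already yields $[u]_{\sigma,p}<\infty$ once you know $[u]_{\sigma,q}<\infty$. So Step~1 could be reduced to the extension argument (upgrading finiteness of the $\Omega\times\Omega$ integral to the full $\mathbb{R}^N\times\mathbb{R}^N$ integral), after which Steps~2 and~3 do all the work. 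This does not affect correctness.
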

The proof of Lemma \ref{lemmaasymp} can be found in \cite[Lemma 7]{GGR}.

\noindent\textit{Proof of Theorem \ref{thm3}.} Of course we have
$$\Lambda_1(p_n) \leq \frac{\frac{1}{p_n}[\phi_R]_{s,p_n}^{p_n} + \frac{1}{p_n}[\psi_R]_{t,p_n}^{p_n}}{\displaystyle\int_{\Omega} \left(\vert \phi_R \vert^{\alpha(p_n)} \dd x \right)\vert \psi_R(x_0) \vert^{\beta(p_n)}}.
$$
Thus, 
\begin{align*}
\limsup_{n \to \infty} \sqrt[p_n]{\Lambda_1(p_n)} &\leq \limsup_{n \to \infty} \left(\frac{1}{p_n} \frac{[\phi_R]_{s,p_n}^{p_n} + [\psi_R]_{t,p_n}^{p_n}}{\displaystyle\int_{\Omega} \left(\vert \phi_R \vert^{\alpha(p_n)} \dd x \right)\vert \psi_R(x_0) \vert^{\beta(p_n)}}\right)^{\frac{1}{p_n}} \\
&\leq\limsup_{n \to \infty} \left( \left(\frac{2}{p_n}\right)^{\frac{1}{p_n}}  \frac{\max\big\{ [\phi_R]_{s,p_n}, [\psi_R]_{t,p_n}\big\}}{\displaystyle\int_{\Omega} \left(\vert \phi_R \vert^{\alpha(p_n)} \dd x \right)\vert \psi_R(x_0) \vert^{\beta(p_n)}}\right)\\
&= \frac{\max\big\{\vert \phi_R \vert_s, \vert \psi_R \vert_t\big\}}{\Vert \phi_R \Vert_{\infty}^{\theta}\vert \psi_R(x_0) \vert^{1-\theta}} \leq \frac{1}{R^{s\theta + (1-\theta)t}},
\end{align*}
proving that the sequence  $\left\{\sqrt[p_n]{\Lambda_1(p_n)}\right\}_{n \in \mathbb{N}}$ is bounded in $\mathbb{R}$, that is, there exists $M_0>0$ such that
\begin{equation}\label{equ1}
\sqrt[p_n]{\Lambda_1(p_n)} \leq M_0\quad \ \text{for all} \ \ n \in \mathbb{N}.
\end{equation}

Theorem \ref{thm1} guarantees that we can take $(u_{p_n},v_{p_n})$ so that
$$u_{p_n} > 0, \ v_{p_n} > 0 \quad  \text{and} \quad \left(\int_{\Omega} \vert u_{p_n} \vert^{\alpha(p_n)} \dd x \right) \vert v_{p_n}(x_0) \vert^{\beta(p_n)} = 1.
$$

Therefore
\begin{equation*}
\Lambda_1(p_n) = \frac{1}{p_n} [u_{p_n}]_{s,p_n}^{p_n} + \frac{1}{p_n}[v_{p_n}]_{t,p_n}^{p_n} \geq \frac{1}{p_n} \max\bigg\{[u_{p_n}]_{s,p_n}^{p_n},[v_{p_n}]_{s,p_n}^{p_n}\bigg\},
\end{equation*}
what yields
\begin{align}\label{upn}
[u_{p_n}]_{s,p_n}\leq p_n^{\frac{1}{p_n}}\sqrt[p_n]{\Lambda_1(s,p_n)}.
\end{align}

For a fixed $m_0 > \frac{N}{s}$, denoting the diameter of $\Omega$ by $\text{diam}(\Omega)$, it follows from \eqref{equ1} and \eqref{upn} that
\begin{align*}
\vert u_{p_n} \vert_{s-\frac{N}{m_0}} &= \sup_{x \neq y} \frac{\vert u_{p_n}(x) - u_{p_n}(y) \vert}{\vert x - y \vert^{s-\frac{N}{m_0}}}= \sup_{x \neq y} \frac{\vert u_{p_n}(x) - u_{p_n}(y) \vert}{\vert x - y \vert^{s-\frac{N}{p_n}}}\,\vert x - y \vert^{\frac{N}{m_0}-\frac{N}{p_n}}\\
&\leq \left(\text{diam}(\Omega)\right)^{\frac{N}{m_0}-\frac{N}{p_n}} \sup_{x \neq y} \frac{\vert u_{p_n}(x) - u_{p_n}(y) \vert}{\vert x - y \vert^{s-\frac{N}{p_n}}}\\
&\leq C \left(\text{diam}(\Omega)\right)^{\frac{N}{m_0}-\frac{N}{p_n}}\,[u_{p_n}]_{s,p_n} \\
&\leq C \left(\text{diam}(\Omega)\right)^{\frac{N}{m_0}-\frac{N}{p_n}}\,p_n^{\frac{1}{p_n}}\,\sqrt[p_n]{\Lambda_1(s,p_n)}\\
\end{align*}
the constant $C$ not depending on $p_n$. We conclude that the sequence $\{u_{p_n}\}$ is uniformly bounded in $C_0^{0,s-\frac{N}{m_0}}(\overline{\Omega})$ and the same reasoning is valid for $\{v_{p_n}\}$, showing that $\{v_{p_n}\}_{n \in \mathbb{N}}$ is uniformly bounded in $C_0^{0,t-\frac{N}{m_0}}(\overline{\Omega})$.

Passing to subsequences if necessary, there exist $u_\infty \in C_0^{0,s-\frac{N}{m_0}}(\overline{\Omega})$ and $v_\infty \in C_0^{0,t-\frac{N}{m_0}}(\overline{\Omega})$ such that
$$u_{p_n} \to u_\infty \quad \text{and} \quad v_{p_n} \to v_\infty \ \ \text{uniformly in} \ \ \Omega.
$$

We also observe that 
$$\Vert u_{\infty} \Vert_{\infty}^{\theta} \vert v_{\infty}(x_0) \vert^{1-\theta} = \lim_{n\to \infty}\left( \left(\int_{\Omega}\vert u_{p_n} \vert^{\alpha(p_n)} \dd x \right) \vert v_{p_n}(x_0) \vert^{\beta(p_n)} \right)^{\frac{1}{p_n}} = 1.
$$

Fix $k > \frac{N}{s}$. By applying Fatou's, Hölder's inequality and \eqref{upn}, we obtain
\begin{align}\label{equ4}
\int_{\Omega}\int_{\Omega} \frac{\vert u_\infty(x) - u_\infty(y)\vert^k}{\vert x- y \vert^{sk}} \dd x \dd y &\leq \liminf_{n \to \infty} \int_{\Omega}\int_{\Omega} \frac{\vert u_{p_n}(x) - u_{p_n}(y)\vert^k}{\vert x- y \vert^{\left(\frac{N}{p_n} + s\right)k}} \dd x \dd y \nonumber\\
&\leq \liminf_{n \to \infty} \vert \Omega \vert^{2\left(\frac{p_n-k}{p_n}\right)} \left(\int_{\Omega}\int_{\Omega} \frac{\vert u_{p_n}(x) - u_{p_n}(y)\vert^{p_n}}{\vert x- y \vert^{N + sp_n}} \dd x \dd y \right)^{\frac{k}{p_n}}\nonumber\\
&\leq \vert \Omega \vert^2 \liminf_{n \to \infty} [u_{p_n}]_{s,p_n}^{k} \\
&\leq \vert \Omega \vert^2 \liminf_{n \to \infty} \left(p_n^{\frac{1}{p_n}}\sqrt[p_n]{\Lambda_1(p_n)}\right)^k \nonumber\\
&\leq \vert \Omega \vert^2 \left(\frac{1}{R^{s\theta + (1-\theta)t}}\right)^k \nonumber.
\end{align}

Thus,
$$\vert u_\infty \vert_s = \lim_{k\to \infty} \left(\int_{\Omega}\int_{\Omega} \frac{\vert u_\infty(x) - u_\infty(y)\vert^k}{\vert x- y \vert^{sk}} \dd x \dd y\right)^{\frac{1}{k}} \leq \lim_{n \to \infty} \vert \Omega \vert^{\frac{2}{k}}\,\frac{1}{R^{s\theta + (1-\theta)t}} = \frac{1}{R^{s\theta + (1-\theta)t}}.
$$

Analagously, 
$$\vert v_\infty \vert_t = \lim_{k\to \infty} \left(\int_{\Omega}\int_{\Omega} \frac{\vert v_\infty(x) - v_\infty(y)\vert^k}{\vert x- y \vert^{tk}} \dd x \dd y\right)^{\frac{1}{k}} \leq \lim_{n \to \infty} \vert \Omega \vert^{\frac{2}{k}}\,\frac{1}{R^{s\theta + (1-\theta)t}} = \frac{1}{R^{s\theta + (1-\theta)t}}
$$
and therefore
$$\max\big\{\vert u_\infty \vert_s, \vert v_\infty \vert_t \big\} \leq \frac{1}{R^{s\theta + (1-\theta)t}}.
$$

It follows from Lemma \ref{prop1} that
$$\frac{1}{R^{s\theta + (1-\theta)t}} = \inf_{(u,v) \in X_{s,t,\infty}^*(\Omega)}\frac{\max\big\{\vert u \vert_s, \vert v \vert_t \big\}}{\Vert u \Vert_{\infty}^{\theta} \vert  v(x_0) \vert^{1-\theta}} \leq  \max\big\{\vert u_\infty \vert_s, \vert v_\infty \vert_t \big\} \leq \frac{1}{R^{s\theta + (1-\theta)t}},
$$
thus producing
\[
\max\big\{\vert u_\infty \vert_s, \vert v_\infty \vert_t \big\} = \frac{1}{R^{s\theta + (1-\theta)t}}.
\]

On its turn, inequality \eqref{equ4} yields
\begin{align*}
\max\left\{ \left(\int_{\Omega}\int_{\Omega} \frac{\vert u_\infty(x) - u_\infty(y)\vert^k}{\vert x- y \vert^{sk}} \dd x \dd y\right)^{\frac{1}{k}},  \left(\int_{\Omega}\int_{\Omega} \frac{\vert v_\infty(x) - v_\infty(y)\vert^k}{\vert x- y \vert^{tk}} \dd x \dd y\right)^{\frac{1}{k}} \right\}
\end{align*}
\begin{align*} \leq \vert \Omega \vert^\frac{2}{k} \liminf_{n \to \infty}\left( p_n^{\frac{1}{p_n}}\sqrt[p_n]{\Lambda_1(p_n)}\right).
\end{align*}
Thus, as $k \to \infty$ we obtain
\begin{align*}
\frac{1}{R^{s\theta + (1-\theta)t}} = \max\big\{\vert u_\infty \vert_s, \vert v_\infty \vert_s \big\}&\leq \liminf_{n\to \infty}\left( p_n^{\frac{1}{p_n}}\sqrt[p_n]{\Lambda_1(p_n)}\right)\\ &\leq \limsup_{n\to \infty}\left( p_n^{\frac{1}{p_n}}\sqrt[p_n]{\Lambda_1(p_n)}\right) \leq \frac{1}{R^{s\theta + (1-\theta)t}},
\end{align*}
from what follows
$$\lim_{n\to \infty} \sqrt[p_n]{\Lambda_1(p_n)} = \lim_{n\to \infty}\left( p_n^{\frac{1}{p_n}}\sqrt[p_n]{\Lambda_1(p_n)}\right) = \frac{1}{R^{s\theta + (1-\theta)t}} = \Lambda_{1,\infty}.
$$

\vspace*{-.7cm}$\hfill\Box$

\section{Proof of Theorem \ref{thm4}}\label{proofthm4}
The next result only shows that solutions in the weak sense are viscosity solutions. Its proof can be achieved by adapting the arguments given by Lindgren and Lindqvist in \cite[Proposition 1]{Lind}. 
\begin{proposition}\label{visc}
The function $u_p$ e $v_p$ given by Theorem \ref{thm1} are viscosity solutions to the problems
\begin{equation*}
\left\{\begin{array}{ll}
\mathcal{L}_{s,p}u = \Lambda_1(p) \alpha(p) \vert u \vert^{\alpha(p)-1}v(x_0) & {\rm in} \ \ \Omega,\\
u= 0 & {\rm in} \ \mathbb{R}^N\setminus\Omega,
\end{array}\right.\end{equation*}
and
\begin{equation*}
\left\{\begin{array}{ll}
\mathcal{L}_{t,p}v = 0 & {\rm in} \ \Omega \setminus \{x_0\},\\
v= 0 & {\rm in} \ \mathbb{R}^N\setminus\Omega,\\
v(x_0) = v_p(x_0),
\end{array}\right.
\end{equation*}
respectively.
\end{proposition}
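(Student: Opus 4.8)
The strategy is the one of Lindgren and Lindqvist in \cite[Proposition~1]{Lind}: a bounded continuous weak (super- or sub-)solution of an equation of fractional $p$-Laplace type is, on the open set where it solves the equation, a viscosity (super- or sub-)solution. First I would decouple the system. Taking $\psi=0$ (resp.\ $\varphi=0$) in \eqref{defi1} with $\lambda=\Lambda_1(p)$ and using that $u_p,v_p>0$, one finds that $u_p$ is a weak solution of $\langle(-\Delta_p)^su_p,\varphi\rangle=\int_\Omega g_p\,\varphi\,\dd x$ for all $\varphi\in W_0^{s,p}(\Omega)$, with $g_p:=\Lambda_1(p)\alpha(p)\,|u_p|^{\alpha(p)-2}u_p\,|v_p(x_0)|^{\beta(p)}\in C(\overline{\Omega})$, and that $v_p$ is a weak solution of $\langle(-\Delta_p)^tv_p,\psi\rangle=c_p\,\psi(x_0)$ for all $\psi\in W_0^{t,p}(\Omega)$, with $c_p:=\Lambda_1(p)\beta(p)\big(\int_\Omega|u_p|^{\alpha(p)}\dd x\big)|v_p(x_0)|^{\beta(p)-2}v_p(x_0)>0$; in particular $\langle(-\Delta_p)^tv_p,\psi\rangle=0$ whenever $\psi\in W_0^{t,p}(\Omega)$ is supported in $\Omega\setminus\{x_0\}$. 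The exterior conditions $u_p=v_p=0$ in $\mathbb{R}^N\setminus\Omega$ and the identity $v(x_0)=v_p(x_0)$ hold by construction, so it only remains to prove the implication ``weak $\Rightarrow$ viscosity'' on $\Omega$ for $u_p$ and on $\Omega\setminus\{x_0\}$ for $v_p$.

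For that implication I would argue by contradiction, treating the supersolution case for $u_p$ (the others being analogous). Suppose there are $y_0\in\Omega$ and $\varphi\in C_0^1(\mathbb{R}^N)$ with $\varphi\le u_p$ on $\mathbb{R}^N$ and $\varphi(y_0)=u_p(y_0)$ for which the supersolution test inequality is violated at $y_0$; written through the operator $(-\Delta_p)^s=-\mathcal{L}_{s,p}$, this is the strict inequality $(-\Delta_p)^s\varphi(y_0)<g_p(y_0)$. Since $(-\Delta_p)^s\varphi$ is well defined (as a principal value) and $x\mapsto(-\Delta_p)^s\varphi(x)$ is continuous near $y_0$ for $\varphi\in C_0^1(\mathbb{R}^N)$ --- this is the delicate regularity input, established in \cite{Lind} --- and since $g_p$ is continuous, there is $\rho>0$ with $\overline{B_\rho(y_0)}\subset\Omega$ and $(-\Delta_p)^s\varphi<g_p$ on $\overline{B_{\rho/2}(y_0)}$. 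Fix a cut-off $\zeta\in C_0^\infty(B_{\rho/2}(y_0))$ with $\zeta\equiv1$ near $y_0$ and set $w:=\varphi+c\,\zeta$ with $c>0$ to be chosen small. Then $w\in C_0^1(\mathbb{R}^N)$, $w=\varphi$ (hence $w\le u_p$) outside $B_{\rho/2}(y_0)$, $w(y_0)=u_p(y_0)+c>u_p(y_0)$, and, by continuity of $h\mapsto(-\Delta_p)^sh$ on $C_0^1$ functions, $(-\Delta_p)^sw<g_p$ on $\overline{B_{\rho/2}(y_0)}$ once $c$ is small enough. Consequently $\eta:=(w-u_p)_+$ is nonnegative, not identically zero ($\eta(y_0)=c$), and supported in $\{w>u_p\}\subset B_{\rho/2}(y_0)\Subset\Omega$, so $\eta\in W_0^{s,p}(\Omega)$ is admissible in the weak formulation of $u_p$. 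Testing the pointwise inequality $(-\Delta_p)^sw<g_p$ against $\eta$ --- via the integration-by-parts identity $\langle(-\Delta_p)^sw,\eta\rangle=\int_{\mathbb{R}^N}\eta\,(-\Delta_p)^sw\,\dd x$, legitimate because $w\in C^1$ near $\mathrm{supp}\,\eta$ and $\eta$ has compact support --- and the weak equation for $u_p$ against the same $\eta$, then subtracting, one gets $\langle(-\Delta_p)^sw-(-\Delta_p)^su_p,\eta\rangle<0$. This contradicts the monotonicity inequality $\langle(-\Delta_p)^sw-(-\Delta_p)^su_p,(w-u_p)_+\rangle\ge0$ valid for the Gagliardo bilinear form (the standard four-region pointwise estimate for $z\mapsto|z|^{p-2}z$). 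Hence $u_p$ is a viscosity supersolution; the subsolution case is symmetric (test functions touching from above, $w=\varphi-c\,\zeta$, $\eta=(u_p-w)_+$), so $u_p$ is a viscosity solution of its equation. Repeating the argument with $t$ in place of $s$, $g_p$ replaced by $0$, and contact points taken in $\Omega\setminus\{x_0\}$ (choosing $\rho$ small enough that $\overline{B_\rho(y_0)}\subset\Omega\setminus\{x_0\}$, so $\eta$ never sees $x_0$ and $\langle(-\Delta_p)^tv_p,\eta\rangle=0$ applies) shows that $v_p$ is a viscosity solution of $\mathcal{L}_{t,p}v=0$ in $\Omega\setminus\{x_0\}$, which together with the exterior datum and $v(x_0)=v_p(x_0)$ is the asserted system.

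The heart of the matter --- and the only place where the fractional setting departs from the local one --- is the nonlocality of $(-\Delta_p)^s$: a viscosity test function is prescribed on all of $\mathbb{R}^N$ and cannot be modified only near the contact point. This is why the competitor is obtained from $\varphi$ by a \emph{compactly supported} bump $c\,\zeta$ (rather than by adding a constant, which would lift $\varphi$ where it already agrees with $u_p$ and destroy the compact support of $\eta$): keeping the modification supported in $B_{\rho/2}(y_0)$ makes $\eta=(\varphi+c\,\zeta-u_p)_+$ genuinely compactly supported in $\Omega$, so the global weak formulation of Theorem~\ref{thm1} applies verbatim, while the $C^1$-smallness of the bump (for $c$ small) preserves the strict pointwise inequality coming from the assumed violation. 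The auxiliary facts used --- that $x\mapsto(-\Delta_p)^s\varphi(x)$ is well defined and continuous for $\varphi\in C_0^1(\mathbb{R}^N)$ and depends continuously on $\varphi$ in $C^1$, the integration-by-parts identity, and the monotonicity of the bilinear form --- are all worked out in \cite{Lind}.
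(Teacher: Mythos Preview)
Your proposal is correct and is precisely the adaptation of \cite[Proposition~1]{Lind} that the paper invokes without detail: you decouple the system via $\psi=0$ and $\varphi=0$ in \eqref{defi1}, then run the standard ``weak $\Rightarrow$ viscosity'' contradiction argument with a compactly supported bump $c\zeta$ and the monotonicity of the Gagliardo form against $(w-u_p)_+$. The only point worth flagging is that when treating $v_p$ you should note explicitly that $p>N/s\geq N/t$ forces $\eta\in W_0^{t,p}(\Omega)\hookrightarrow C_0(\overline{\Omega})$, so $\eta(x_0)=0$ is meaningful and the weak formulation with the Dirac mass indeed reduces to $\langle(-\Delta_p)^t v_p,\eta\rangle=0$; otherwise your argument matches the paper's intended one.
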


\noindent\textit{Proof of Theorem \ref{thm4}.} We start showing that $v_\infty$ is a viscosity solution to the problem
\begin{equation}\label{Pvis4}
\left\{\begin{array}{ll}
\mathcal{L}_{t,\infty} v = 0 & {\rm in} \ \ \Omega \setminus \{x_0\},\\
v  = 0 & {\rm in} \ \mathbb{R}^N\setminus\Omega,\\
v(x_0) = v_\infty(x_0).
\end{array}
\right.
\end{equation}

According to Theorem \ref{thm3} we have $v_\infty =0$ in $\mathbb{R}^{N}\setminus \Omega$ and $v_\infty(x_0) = v_\infty(x_0)$. So, we need only show that $v_\infty$ is a viscosity solution. Fix $(z_0,\varphi) \in \left(\Omega\setminus \{x_0\}\right) \times C_0^{1}(\mathbb{R}^{N}\setminus\{x_0\})$ satisfying
$$\varphi(z_0) = v_\infty(z_0) \qquad\text{and} \qquad \varphi(x) \leq v_\infty(x), \ \ \forall x \in \mathbb{R}^{N}\setminus\{x_0,z_0\}.
$$

Theorem \ref{thm3} also guarantees the existence of a sequence $\{(u_{p_n}, v_{p_n})\}_{n \in \mathbb{N}} \in C_{0}^{0,s}(\overline{\Omega}) \times C_{0}^{0,t}(\overline{\Omega})$ such that $u_{p_n} \to u_\infty$ and $v_{p_n} \to v_\infty$ uniformly in $\Omega$. Thus, there exists a sequence $\{x_{p_n}\}_{n \in \mathbb{N}}$ so that $x_{p_n} \to z_0$ and $v_{p_n}(x_{p_n}) = \varphi(x_{p_n})$. Since $x_0 \neq z_0$, we can assume the existence of $n_0 \geq 0$ and a ball $B_{\rho}(z_0)$ such that
$$x_{p_n} \notin B_{\rho}(z_0) \subset \Omega\setminus\{z_0\}, \quad \forall n \geq n_0.
$$

Since $v_{p_n}$ weakly satisfies 
$$(-\Delta_{p_n})^{t}v_{p_n}(x) =  \Lambda_{1}(p_n) \alpha(p_n) \left(\int_{\Omega}\vert u_{p_n} \vert^{\alpha(p_n)} \dd x \right) \vert v_{p_n}(x_0) \vert^{\beta(p_n)} v_{p_n}(x_0)\delta_{x_0}$$
in $\Omega$, then also in $\Omega\setminus \{x_0\}$, Proposition \ref{visc} yields that $v_{p_n}$ is a viscosity solution to the problem
\begin{equation} \label{Pvis2}
\left\{\begin{array}{llll}
\mathcal{L}_{t,p_n}v = 0 & {\rm in} \ \ \Omega \setminus \{x_0\},\\
v= 0 & {\rm in} \ \mathbb{R}^N\setminus\Omega,\\
v(x_0) = v_{p_n}(x_0).
\end{array}
\right.
\end{equation}

By standard arguments, we obtain a sequence $\{z_n\}_{n \in \mathbb{N}} \subset B_{\rho}(x_0)$ such that $z_n \to z_0$ and
$$\sigma_n := \min_{B_{\rho}(x_0)}\left( v_{p_n} - \varphi \right) = v_{p_n}(z_n) - \varphi(z_n) < v_{p_n}(x) - \varphi(x), \ \ \forall x \neq x_{p_n}.
$$

Now, define $\Psi_n:= \varphi + \sigma_n$. We have
$$\Psi_n(z_n) = \varphi(z_n) + \sigma_n =  v_{p_n}(z_n) \qquad \text{and} \qquad \Psi_n(x) = \varphi(x) + \sigma_n < v_{p_n}(x), \ \ \forall x \in B_{\rho}(x_0).
$$

Since $v_{p_n}$ satisfies \eqref{Pvis2} in $\Omega\setminus \{x_0\}$, 
$$(\mathcal{L}_{t,\infty}\Psi_n)(z_n) \leq 0, \qquad \forall n \geq n_0.
$$

Thus, defining
$$\left(A_{p_n,t}(\varphi(z_n))\right)^{p_n-1}:= 2\int_{\mathbb{R}^N} \frac{\vert \varphi(z_n) - \varphi(y) \vert^{p_n-2}(\varphi(z_n) - \varphi(y))^{+}}{\vert z_n-y \vert^{N+tp_n}} \dd y
$$
and
$$\left(B_{p_n,t}(\varphi(z_n))\right)^{p_n-1}:= 2\int_{\mathbb{R}^N} \frac{\vert \varphi(z_n) - \varphi(y) \vert^{p_n-2}(\varphi(z_n) - \varphi(y))^{-}}{\vert z_n-y \vert^{N+tp_n}} \dd y,
$$
we have
\begin{align}\label{Lim}
\left(A_{p_n,t}(\varphi(z_n))\right)^{p_n-1} - \left(B_{p_n,t}(\varphi(z_n))\right)^{p_n-1} &= 2\int_{\mathbb{R}^N} \frac{\vert \varphi(z_n) - \varphi(y) \vert^{p_n-2}(\varphi(z_n) - \varphi(y))}{\vert z_n-y \vert^{N+sp_n}} \dd y\nonumber\\  &\leq 0 , \quad \forall n \geq n_0.
\end{align}

Applying \cite[Lemma 3.9]{GGA} (see also \cite[Lemma 6.1]{Ferreira}), we obtain
$$\lim_{n \to \infty} A_{p_n,t}(\varphi(z_n)) =  \left(\mathcal{L}_{t,\infty}^{+}\varphi\right)(z_0) \qquad \text{and} \qquad \lim_{n \to \infty} B_{p_n,t}(\varphi(z_n)) =  \left(-\mathcal{L}_{t,\infty}^{-}\varphi\right)(z_0).
$$

As $n \to \infty$ in $\eqref{Lim}$ we get
$$\left(\mathcal{L}_{t,\infty}\varphi\right)(x_0) = \left(\mathcal{L}_{t,\infty}^{+}\varphi\right)(x_0) + \left(\mathcal{L}_{t,\infty}^{-}\varphi\right)(x_0) \leq 0,
$$
showing that $v_\infty$ is a viscosity supersolution of \eqref{Pvis4}. Analogously, we obtain that $v_\infty$ is a viscosity subsolution of the same equation, and thus a viscosity solution of \eqref{Pvis4}.

Now we show that $u_\infty$ is a viscosity solution to the problem
\begin{equation} \label{Pvis5}
\left\{\begin{array}{ll}
\max\bigg\{\mathcal{L}_{s,\infty} u, \mathcal{L}^{-}_{s,\infty} u + \Lambda_{1,\infty} \vert u(x) \vert^{\theta} \vert v_\infty(x_0)\vert^{1-\theta}\bigg\}= 0 & {\rm in} \ \ \Omega,\\
u = 0 & {\rm in} \ \mathbb{R}^N\setminus\Omega.
\end{array}
\right.
\end{equation}

The same reasoning used before imply that, for given  $(z_0,\varphi) \in \Omega \times C_0^{1}(\mathbb{R}^{N})$, we find a sequence $\{u_{p_n}\}_{n \in \mathbb{N}}$ in $C_{0}^{0,s}(\overline{\Omega})$ such that $u_{p_n} \to u_\infty$ uniformly in $\Omega$ and a sequence $\{x_{p_n}\}_{n \in \mathbb{N}}$ satisfying $x_{p_n} \to z_0$ and $u_{p_n}(x_{p_n}) = \varphi(x_{p_n})$. Thus, there exist $n_0 \geq 0$ and a ball $B_{\rho}(z_0)$ so that
$$x_{p_n} \notin B_{\rho}(z_0) \subset \Omega\setminus\{z_0\}, \ \ \forall n \geq n_0.
$$

As before, we obtain that $u_{p_n}$ is a viscosity solution to the problem
\begin{equation*}
\left\{\begin{array}{ll}
\mathcal{L}_{s,p_n}u_{p_n} = \Lambda_1(p_n) \alpha(p_n) \vert u_{p_n} \vert^{\alpha(p_n)-1}v_{p_n}(x_0) & {\rm in} \ \ \Omega,\\
u= 0 & {\rm in} \ \mathbb{R}^N\setminus\Omega.
\end{array}
\right.
\end{equation*}

Considering, as before, a sequence $\{z_n\}_{n \in \mathbb{N}} \subset B_{\rho}(z_0)$ such that $z_n \to z_0$ and defining $\Psi_n$ as in the previous proof, we obtain
$$(\mathcal{L}_{s,p_n}\Psi_n)(z_n) \leq \Lambda_1(p_n) \alpha(p_n) \vert \Psi_n(z_n) \vert^{\alpha(p_n)-1}v_{p_n}(x_0)  \ \ \forall n \geq n_0,
$$
which is equivalent to the inequality 
\begin{equation*}
\left(A_{p_n,s}(\varphi(z_n))\right)^{p_n-1} - \left(B_{p_n,s}(\varphi(z_n))\right)^{p_n-1} \leq \left(C_{p_n}(\varphi(z_n))\right)^{p_n-1}  \ \ \forall n \geq n_0,
\end{equation*}
where
$$\bigg(C_{p_n}(\varphi(z_n))\bigg)^{p_n-1} := \Lambda_1(p_n) \alpha(p_n) \vert \varphi + \sigma_n \vert^{\alpha(p_n)-1}v_{p_n}(x_0)
$$
and the other terms are analogous to that of the previous case, just changing $t$ for $s$.

Observe that a direct calculation yields
\begin{align*}
\lim_{n \to \infty} C_{p_n}(\varphi(z_n)) &= \lim_{n \to \infty} \left( \sqrt[p_n]{\Lambda_1(p_n)} \sqrt[p_n]{\alpha(p_n)} \vert \varphi(z_n) + \sigma_n \vert^{\frac{\alpha(p_n)}{p_n-1}}v_{p_n}(x_0)^{\frac{\beta(p_n)}{p_n-1}}\right)\\
&= \Lambda_{1,\infty} \vert\varphi(z_0)\vert^{\theta} v_{\infty}(x_0)^{1-\theta}
\end{align*}

So, as $n \to \infty$ em $\eqref{Lim}$ we obtain
$$\left(\mathcal{L}_{s,\infty}\varphi\right)(x_0) = \left(\mathcal{L}_{s,\infty}^{+}\varphi\right)(z_0) + \left(\mathcal{L}_{s,\infty}^{-}\varphi\right)(z_0) \leq \Lambda_{1,\infty} \vert\varphi(z_0)\vert^{\theta} v_{\infty}(x_0)^{1-\theta}
$$
and therefore
$$\max\left\{\mathcal{L}_{s,\infty} u, \mathcal{L}^{-}_{s,\infty} u - \Lambda_{1,\infty} \vert u(x) \vert^{\theta} \vert v_\infty(x_0)\vert^{1-\theta}\right\} \leq  0 \ \ {\rm in} \ \ \Omega,$$
that is, $u_\infty$ is a viscosity supersolution to problem \eqref{Pvis4}. Analogously, $u_\infty$ is a viscosity subsolution to the same problem. We are done.
$\hfill\Box$

\begin{remark}\label{variation}
We observe that the system 
\begin{equation} \label{Pinfty}\tag{$P^1_\infty$}
\left\{\begin{array}{ll}
(-\Delta_p)^{s}u(x) = \lambda \alpha(p) \vert u \vert^{\alpha(p)-2} u \vert v(x_v)\vert^{\beta(p)}  & {\rm in} \ \ \Omega,\\
(-\Delta_p)^{t}v(x) = \lambda \beta(p) \left(\displaystyle\int_{\Omega}\vert u \vert^{\alpha(p)} \dd x\right) \vert v(x_v) \vert^{\beta(p)-2} v(x_v) \delta_{x_v} & {\rm in} \ \ \Omega,\\
u= v=0 & {\rm in} \ \mathbb{R}^N\setminus\Omega,\\
\end{array}\right.
\end{equation}
where $x_v$ is a maximum point of $v$ in $\overline{\Omega}$ can be treated in the same setting given in Section \ref{setting}, applying the same procedure used to solve system \eqref{P1}.
\end{remark}
\section{On the system\eqref{P2}}\label{functional}
In this section we consider the functional system \eqref{P2}.
\begin{equation*}
\left\{\begin{array}{ll}
(-\Delta_p)^{s}u(x) = \lambda \alpha(p) \vert u(x_1) \vert^{\alpha(p)-2} u(x_1) \vert v(x_2) \vert^{\beta(p)} \delta_{x_1} & {\rm in} \ \ \Omega,\\
(-\Delta_p)^{t}v(x) = \lambda \beta(p) \vert u(x_1) \vert^{\alpha(p)} \vert v(x_2) \vert^{\beta(p)-2} v(x_2) \delta_{x_2} & {\rm in} \ \ \Omega,\\
u= v=0 & {\rm in} \ \mathbb{R}^N\setminus\Omega,
\end{array}
\right.
\end{equation*}
where $x_1,x_2\in\Omega$ are arbitrary points, $x_1\neq x_2$. Observe that both equations are functional, so their treatment recall that used to deal with the second equation in system \eqref{P1}.

\begin{definition}
A pair  $(u,v) \in X_{s,t,p}(\Omega)$ is a weak solution to \eqref{P2} if
\begin{align}\label{defisolp2}
\left\langle (-\Delta_p)^s u, \varphi \right\rangle + \left\langle (-\Delta_p)^s v, \psi \right\rangle = \lambda&\left[\alpha(p)\vert u(x_1)\vert^{\alpha(p)-2}u(x_1)\vert v(x_2) \vert^{\beta(p)}\varphi(x_1)\right. \\
&\left.\ + \beta(p)\vert u(x_1)\vert^{\alpha(p)}\vert v(x_2) \vert^{\beta(p)-2}v(x_2) \psi(x_2) \right]\nonumber
\end{align}
for all $(\varphi,\psi) \in X_{s,t,p}(\Omega)$.
\end{definition}

The denominator in the definition of $Q_{s,t,p}$ should be changed into $|u(x_1)|^{\alpha(p)}\,|v(x_2)|^{\beta(p)}$, maintaining the definition of $\Lambda_1(p)$. The first result, which is similar to Theorem \ref{thm1} is the following.
\begin{theorem}\label{thm12}
For each $p \in \left(\frac{N}{s}, \infty\right)$ we have
\begin{enumerate}
\item[$(i)$] $\Lambda_1(p) > 0$;
\item[$(ii)$] there exist $(u_p,v_p) \in X^{*}_{s,t,p}(\Omega)$ such that $u_p > 0$, $v_p > 0$ and
$$\vert u_p(x_1) \vert^{\alpha(p)}\vert v_p(x_2) \vert^{\beta(p)} = 1 \qquad\text{and}\qquad\Lambda_{1}(s,p) = Q_{s,t,p}(u_p,v_p).	
$$
\end{enumerate}
\end{theorem}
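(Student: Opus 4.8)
The plan is to follow the scheme of the proof of Theorem \ref{thm1} almost verbatim, since the only structural change is that the integral term $\int_\Omega |u|^{\alpha(p)}\dd x$ in the denominator of $Q_{s,t,p}$ has been replaced by the point evaluation $|u(x_1)|^{\alpha(p)}$, which is exactly the same kind of functional object as the factor $|v(x_2)|^{\beta(p)}$ that already appears. First I would establish $(i)$: for $(u,v)\in X^*_{s,t,p}(\Omega)$ the Morrey inequality \eqref{const} gives $|u(x_1)|=|u(x_1)-u(y_{x_1})|\le C\,[u]_{s,p}\,(\mathrm{diam}\,\Omega)^{s-N/p}$ where $y_{x_1}\in\partial\Omega$ realizes the distance, and similarly $|v(x_2)|\le C\,[v]_{t,p}\,(\mathrm{diam}\,\Omega)^{t-N/p}$ (using $W^{t,p}_0\hookrightarrow W^{s,p}_0$ if one wants a single exponent). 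Hence
\[
|u(x_1)|^{\alpha(p)}|v(x_2)|^{\beta(p)} \le C_p\,[u]_{s,p}^{\alpha(p)}[v]_{t,p}^{\beta(p)}
\le C_p\left(\frac{\alpha(p)}{p}[u]_{s,p}^{p}+\frac{\beta(p)}{p}[v]_{t,p}^{p}\right)
\le C_p\left(\frac{1}{p}[u]_{s,p}^{p}+\frac{1}{p}[v]_{t,p}^{p}\right),
\]
where the middle step is Young's inequality with exponents $p/\alpha(p)$ and $p/\beta(p)$ (legitimate by $(h_1)$) and $C_p$ absorbs the $\mathrm{diam}$ factors. Dividing through shows $Q_{s,t,p}(u,v)\ge 1/C_p>0$ for every admissible pair, so $\Lambda_1(p)>0$.

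Next, for $(ii)$, take a minimizing sequence $(u_n,v_n)$ for $Q_{s,t,p}$ and normalize as in the sketch of Theorem \ref{thm1}: replacing $(u_n,v_n)$ by
\[
U_n=\frac{u_n}{|u_n(x_1)|^{\alpha(p)/p}|v_n(x_2)|^{\beta(p)/p}},\qquad
V_n=\frac{v_n}{|u_n(x_1)|^{\alpha(p)/p}|v_n(x_2)|^{\beta(p)/p}},
\]
which leaves $Q_{s,t,p}$ invariant by $0$-homogeneity and gives $|U_n(x_1)|^{\alpha(p)}|V_n(x_2)|^{\beta(p)}=1$. The bound $Q_{s,t,p}(U_n,V_n)\to\Lambda_1(p)$ together with the normalization forces $[U_n]_{s,p}^p+[V_n]_{t,p}^p$ to stay bounded, so by reflexivity of $W^{s,p}_0\times W^{t,p}_0$ and the compact embedding $X_{s,t,p}(\Omega)\hookrightarrow X_0(\Omega)$ a subsequence converges weakly in $X_{s,t,p}$ and uniformly in $X_0(\Omega)$ to some $(u_p,v_p)$. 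Weak lower semicontinuity of the Gagliardo seminorms handles the numerator, while uniform convergence makes the denominator pass to the limit, in particular $|u_p(x_1)|^{\alpha(p)}|v_p(x_2)|^{\beta(p)}=1\neq 0$, so $(u_p,v_p)\in X^*_{s,t,p}(\Omega)$ and $Q_{s,t,p}(u_p,v_p)=\Lambda_1(p)$. Replacing $u_p,v_p$ by $|u_p|,|v_p|$ does not increase the seminorms and does not change the denominator, so we may assume $u_p,v_p\ge 0$; a Harnack-type / strong minimum argument as in \cite[Lemma 22]{Lind} (used exactly as in the sketch of Theorem \ref{thm1}) upgrades this to $u_p>0$, $v_p>0$ in $\Omega$.

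The main point to be careful about — and the place I expect the only real subtlety — is the variational characterization that makes $(u_p,v_p)$ a \emph{weak solution} in the sense of \eqref{defisolp2}: because the denominator is now built from two point evaluations, the map $(u,v)\mapsto |u(x_1)|^{\alpha(p)}|v(x_2)|^{\beta(p)}$ is only right-hand Gâteaux-differentiable at points where a coordinate vanishes, just as in system \eqref{P1}. As in the sketch of Theorem \ref{thm1}, one fixes $(\phi,\psi)\in X_{s,t,p}(\Omega)$, sets $g(\tau)=Q_{s,t,p}(u_p+\tau\phi,v_p+\tau\psi)$, notes $g$ is $C^1$ near $\tau=0$ (the numerator is Fréchet differentiable by \eqref{eq3} and the denominator is smooth and positive near $0$ since $u_p(x_1),v_p(x_2)>0$ by positivity), and concludes $g'(0)=0$; computing $g'(0)$ with \eqref{eq3}, \eqref{eq4} and the chain rule yields precisely \eqref{defisolp2} with $\lambda=\Lambda_1(p)$. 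Thus $\Lambda_1(p)$ is a principal eigenvalue of \eqref{P2}. For the bookkeeping one may take $x_0=x_1$ in the definition of $X^*_{s,t,p}(\Omega)$ or simply record that the admissible class is $\{(u,v): u(x_1)v(x_2)\neq 0\}$; everything else is a routine transcription of Section \ref{remarks}.
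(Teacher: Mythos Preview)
Your proposal is correct and follows essentially the same route as the paper, which explicitly states that the proof of Theorem \ref{thm12} is obtained by the method sketched for Theorem \ref{thm1} in Section \ref{remarks} (equivalently \cite[Theorem 1]{MJRD}). Your argument---Morrey plus Young for $(i)$, a normalized minimizing sequence with compactness and weak lower semicontinuity for $(ii)$, the $g'(0)=0$ computation for the Euler--Lagrange identity \eqref{defisolp2}, and the \cite[Lemma 22]{Lind} positivity step---matches that sketch point for point.
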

Its proof is also similar to that of Theorem \ref{thm1}. For details, see the proof sketched in Section \ref{remarks} or \cite[Theorem 1]{MJRD}. 

The next step is to prove a result similar to Theorem \ref{thm2}. Changing the definition of $S_{p}$ and $S{\infty}$ into
\begin{align*}S_{p} &= \left\{(u,v) \in X_{s,t,p}(\Omega)\,:\, \vert u(x_1)\vert^{\alpha(p)}\vert  v(x_2) \vert^{\beta(p)} = 1\right\}
\intertext{and} 
S_{\infty} &= \left\{(u,v) \in X_{s,t,p}\,:\, \vert u(x_1)\vert^{\theta}\vert  v(x_2) \vert^{1-\theta} = 1\right\}
\end{align*}
and also the denominator in $G_p$ into $\vert u(x_1)\vert^{\theta}\vert  v(x_2) \vert^{1-\theta}$, we obtain the version of Theorem \ref{thm2} with the same statement.

Up to this point, the points $x_1,x_2\in \Omega$ were taken arbitrarily. Now, we consider sequences $u_n:=u_{p_n}$ and $v_n:=u_{p_n}$ given by Theorem \ref{thm1}. Since $u_n,v_n>0$, we can take $x_1$ as a maximum $x_n$ of $u_n$ and $x_2$ as a maximum $y_n$ of $v_n$. Observe that we do not suppose that the maxima $x_n$ and $y_n$ are unique. However, we will prove that the sequence $(x_n,y_n)$ has a subsequence that converges to $(x_\infty,y_\infty)$ and the equality $|u_\infty(x_\infty)|^\theta|v_\infty(y_\infty)|^{1-\theta}=1$ still holds true. 
\begin{theorem}\label{thm32}
Let  $\{p_n\}$ be a sequence converging to $\infty$ and $(u_{p_n}, v_{p_n})$ the solution of \eqref{P1} given in  Theorem \ref{thm12}. Denote $x_n:=x_{u_{p_n}}$ and $y_n:=x_{v_{p_n}}$ a sequence of maxima to $u_{p_n}$ and $v_{p_n}$, respectively. Passing to a subsequence if necessary,  $\{(u_{p_n}, v_{p_n})\}_{n \in \mathbb{N}}$ converges uniformly to $(u_\infty, v_\infty) \in C_{0}^{0,s}(\overline{\Omega}) \times C_{0}^{0,s}(\overline{\Omega})$, while the sequences $\{x_n\}$ and $\{y_n\}$ converge to $x_{\infty}\in\Omega$ and $y_\infty\in\Omega$, respectively, which are the maxima of $u_\infty$ and $v_\infty$.  Furthermore
\begin{enumerate}
\item [$(i)$] $u_\infty \geq 0$, $v_\infty \geq 0$ and $\vert u_\infty(x_\infty) \vert^{\theta} \vert v_\infty(y_\infty) \vert^{1 - \theta}=1$;
\item [$(ii)$] $\displaystyle\lim_{n\to \infty} \sqrt[p_n]{\Lambda_1(p_n)} = \frac{1}{R^{s\theta+(1-\theta)t}}$
\item [$(iii)$] $\max\big\{\vert u_\infty \vert_s, \vert v_\infty \vert_t \big\} = \displaystyle\frac{1}{R^{s\theta+(1-\theta)t}}$;
\item [$(iv)$] If $s=t$, then $$0 \leq u_\infty(x) \leq \displaystyle\frac{\left(\textup{dist}(x, \mathbb{R}^{N}\setminus \Omega)\right)^{s}}{R^{s}}\quad\text{and}\quad 0 \leq v_\infty(x) \leq \frac{\left(\textup{dist}(x, \mathbb{R}^{N}\setminus \Omega)\right)^{s}}{R^{s}}.$$
\end{enumerate}
\end{theorem}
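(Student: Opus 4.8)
The plan is to run the proof of Theorem~\ref{thm3} almost verbatim, keeping track of the maximum points $x_n$, $y_n$, and then to read off the pointwise bound (iv) from (iii). First I would obtain an upper bound for $\Lambda_1(p_n)^{1/p_n}$. Fix a deepest point $x^{*}\in\Omega$, that is, $\textup{dist}(x^{*},\mathbb{R}^{N}\setminus\Omega)=R$, and take as competitors $\phi_R(x)=R^{(\theta-1)t-s\theta}\big(R-|x-x^{*}|\big)^{s}_{+}$ and $\psi_R(x)=R^{(\theta-1)t-s\theta}\big(R-|x-x^{*}|\big)^{t}_{+}$, both supported in $\overline{B_R(x^{*})}\subset\overline{\Omega}$, hence admissible in the minimization defining $\Lambda_1(p_n)$. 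As in the proof of Theorem~\ref{thm3} one computes $\|\phi_R\|_{\infty}^{\theta}\|\psi_R\|_{\infty}^{1-\theta}=1$ and $\max\{|\phi_R|_s,|\psi_R|_t\}=R^{-(s\theta+(1-\theta)t)}$; inserting $(\phi_R,\psi_R)$ into the quotient, taking $p_n$-th roots, and using Lemma~\ref{lemmaasymp} together with $\alpha(p_n)/p_n\to\theta$ and $\beta(p_n)/p_n\to1-\theta$ gives $\limsup_{n}\Lambda_1(p_n)^{1/p_n}\le R^{-(s\theta+(1-\theta)t)}$; in particular $p_n^{1/p_n}\Lambda_1(p_n)^{1/p_n}$ is bounded.

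Next I would extract the limits. Since $(u_{p_n},v_{p_n})$ realizes $\Lambda_1(p_n)$ with unit denominator (Theorem~\ref{thm12}), we have $\Lambda_1(p_n)=\frac1{p_n}[u_{p_n}]_{s,p_n}^{p_n}+\frac1{p_n}[v_{p_n}]_{t,p_n}^{p_n}$, so $[u_{p_n}]_{s,p_n},\,[v_{p_n}]_{t,p_n}\le p_n^{1/p_n}\Lambda_1(p_n)^{1/p_n}$, and the uniform Morrey inequality \eqref{const} bounds $\{u_{p_n}\}$ in $C_0^{0,s-N/m_0}(\overline{\Omega})$ and $\{v_{p_n}\}$ in $C_0^{0,t-N/m_0}(\overline{\Omega})$ for a fixed $m_0>N/s$. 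By Arzel\`{a}--Ascoli and compactness of $\overline{\Omega}$, after passing to a subsequence $u_{p_n}\to u_\infty\ge0$ and $v_{p_n}\to v_\infty\ge0$ uniformly, and $x_n\to x_\infty$, $y_n\to y_\infty$ in $\overline{\Omega}$. Equicontinuity of $\{u_{p_n}\}$ gives $u_{p_n}(x_n)\to u_\infty(x_\infty)$, and passing to the limit in $u_{p_n}(x_n)\ge u_{p_n}(x)$ shows that $x_\infty$ maximizes $u_\infty$; likewise $y_\infty$ maximizes $v_\infty$. Taking $p_n$-th roots in $|u_{p_n}(x_n)|^{\alpha(p_n)}|v_{p_n}(y_n)|^{\beta(p_n)}=1$ and letting $n\to\infty$ yields $|u_\infty(x_\infty)|^{\theta}|v_\infty(y_\infty)|^{1-\theta}=1$; hence $u_\infty,v_\infty\not\equiv0$, and since both vanish on $\partial\Omega$ their maxima satisfy $x_\infty,y_\infty\in\Omega$. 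This establishes (i).

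For (ii) and (iii): choosing $y\in\partial\Omega$ with $|x_\infty-y|=\textup{dist}(x_\infty,\mathbb{R}^{N}\setminus\Omega)\le R$, and similarly for $v_\infty$, the argument of Lemma~\ref{prop1} gives $1=|u_\infty(x_\infty)|^{\theta}|v_\infty(y_\infty)|^{1-\theta}\le|u_\infty|_s^{\theta}|v_\infty|_t^{1-\theta}R^{s\theta+(1-\theta)t}\le\max\{|u_\infty|_s,|v_\infty|_t\}\,R^{s\theta+(1-\theta)t}$, so $\max\{|u_\infty|_s,|v_\infty|_t\}\ge R^{-(s\theta+(1-\theta)t)}$. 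Conversely, for fixed $k>N/s$, combining Fatou's lemma and H\"older's inequality as in the derivation of \eqref{equ4}, applied to $u_\infty$ and $v_\infty$ simultaneously, and then letting $k\to\infty$ gives $\max\{|u_\infty|_s,|v_\infty|_t\}\le\liminf_n\Lambda_1(p_n)^{1/p_n}$. Together with the $\limsup$ bound from the first step this squeezes
\[
R^{-(s\theta+(1-\theta)t)}\le\max\{|u_\infty|_s,|v_\infty|_t\}\le\liminf_n\Lambda_1(p_n)^{1/p_n}\le\limsup_n\Lambda_1(p_n)^{1/p_n}\le R^{-(s\theta+(1-\theta)t)},
\]
so all of these coincide with $R^{-(s\theta+(1-\theta)t)}$, which is (ii) and (iii).

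Finally, (iv): when $s=t$ we have $s\theta+(1-\theta)t=s$, so (iii) forces $|u_\infty|_s\le R^{-s}$ and $|v_\infty|_s\le R^{-s}$; for $x\in\Omega$, taking $y\in\partial\Omega$ with $|x-y|=\textup{dist}(x,\mathbb{R}^{N}\setminus\Omega)$ and using $u_\infty(y)=0$ and $u_\infty\ge0$ gives $0\le u_\infty(x)=u_\infty(x)-u_\infty(y)\le|u_\infty|_s|x-y|^{s}\le R^{-s}\big(\textup{dist}(x,\mathbb{R}^{N}\setminus\Omega)\big)^{s}$, and identically for $v_\infty$. The only step not already contained in the proof of Theorem~\ref{thm3} is the bookkeeping of the maxima: since $x_n$ and $y_n$ need not be unique, one must pass to subsequences along which they converge and then transfer maximality to $u_\infty$ and $v_\infty$ via equicontinuity and uniform convergence. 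I expect this --- together with checking that the deep-point competitors $(\phi_R,\psi_R)$ belong to the admissible class --- to be the only point requiring genuine care; the rest is a transcription of the arguments already developed for system \eqref{P1}.
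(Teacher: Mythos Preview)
Your proposal is correct and follows essentially the same approach as the paper, which itself says only that the proof ``can be obtained by mimicking the method used to prove Theorem~\ref{thm3}.'' You have faithfully carried out that mimicry, added the necessary bookkeeping for the moving maxima $x_n,y_n$, and supplied the clean argument for (iv) via the H\"older bound $|u_\infty|_s\le R^{-s}$ applied between $x$ and its nearest boundary point; none of this departs from the paper's intended route.
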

Its proof can be obtained by mimicking the method used to prove Theorem \ref{thm3}. Comparing this result with the one in \cite{MJRD}, we first note that our result brings information about the sequence of maxima of $u_{p_n}$ and $v_{p_n}$, which are absent in that paper. 

Finally, the analogue to Theorem \ref{thm4} is the following. Once again, its proof is obtained by adapting that of the Theorem \ref{thm4}. 
\begin{theorem}The functions $u_\infty$ and $v_\infty$, given by Theorem \ref{thm32}, are viscosity  solutions of the problems
\begin{equation*}
\left\{\begin{array}{ll}
\mathcal{L}_{s,\infty} u = 0 & {\rm in} \ \ \Omega \setminus \{x_1\},\\
u = 0 & {\rm in} \ \mathbb{R}^N\setminus\Omega,\\
u(x_1) = u_\infty(x_1)
\end{array}\right.\qquad\text{and}\qquad
\left\{\begin{array}{ll}
\mathcal{L}_{t,\infty} v = 0 & {\rm in} \ \ \Omega \setminus \{x_2\},\\
v = 0 & {\rm in} \ \mathbb{R}^N\setminus\Omega,\\
v(x_2) =v_\infty(x_2),
\end{array}\right.
\end{equation*}
respectively.
\end{theorem}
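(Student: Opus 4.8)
\noindent The plan is to transcribe, with the obvious simplifications, the argument used in the proof of Theorem~\ref{thm4} (Section~\ref{proofthm4}). The essential structural difference is that in system~\eqref{P2} \emph{both} right-hand sides are Dirac masses, supported respectively at $x_1$ and $x_2$; hence, away from those two points, each equation is the source-free equation $\mathcal{L}_{\sigma,p}w=0$, and no extra term of the type $\Lambda_{1,\infty}\vert u\vert^{\theta}\vert v_\infty(x_0)\vert^{1-\theta}$ survives in the limit. Consequently the limit problems are the two decoupled homogeneous problems in the statement (where $x_1,x_2$ are to be read as the limiting maxima $x_\infty,y_\infty$ produced in Theorem~\ref{thm32}), and the proof is actually shorter than that of Theorem~\ref{thm4}.

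First I would record the analogue of Proposition~\ref{visc}: the weak solutions $u_{p_n}$ and $v_{p_n}$ from Theorem~\ref{thm12} are viscosity solutions of $\mathcal{L}_{s,p_n}u_{p_n}=0$ in $\Omega\setminus\{x_n\}$ and of $\mathcal{L}_{t,p_n}v_{p_n}=0$ in $\Omega\setminus\{y_n\}$, where $x_n,y_n$ are the maxima chosen in Theorem~\ref{thm32}. This follows exactly as in \cite[Proposition~1]{Lind}, since testing \eqref{defisolp2} against functions supported in $\Omega\setminus\{x_n\}$ (resp.\ $\Omega\setminus\{y_n\}$) annihilates the corresponding right-hand side.

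Next comes the passage to the limit, which I would carry out for $v_\infty$ (the case of $u_\infty$ being identical, with $s$, $x_n$, $x_\infty$ in place of $t$, $y_n$, $y_\infty$). Fix $z_0\in\Omega\setminus\{y_\infty\}$ and $\varphi\in C_0^1(\mathbb{R}^N)$ with $\varphi(z_0)=v_\infty(z_0)$ and $\varphi\leq v_\infty$ on $\mathbb{R}^N$. Using the uniform convergence $v_{p_n}\to v_\infty$ from Theorem~\ref{thm32} one produces, on a small ball $B_\rho(z_0)\subset\Omega$, points $z_n\to z_0$ realizing $\sigma_n:=\min_{\overline{B_\rho(z_0)}}(v_{p_n}-\varphi)=v_{p_n}(z_n)-\varphi(z_n)$. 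Since Theorem~\ref{thm32} also gives $y_n\to y_\infty\neq z_0$, for all large $n$ we have $y_n\notin\overline{B_\rho(z_0)}$, so the viscosity supersolution property of $v_{p_n}$ on $\Omega\setminus\{y_n\}$ applies to $\Psi_n:=\varphi+\sigma_n$ at $z_n$ and yields $(\mathcal{L}_{t,p_n}\Psi_n)(z_n)\leq 0$. Rewriting this with the one-sided quantities $A_{p_n,t}(\varphi(z_n))$ and $B_{p_n,t}(\varphi(z_n))$ exactly as in \eqref{Lim}, and invoking \cite[Lemma~3.9]{GGA} (see also \cite[Lemma~6.1]{Ferreira}), one passes to the limit to obtain $(\mathcal{L}_{t,\infty}^{+}\varphi)(z_0)+(\mathcal{L}_{t,\infty}^{-}\varphi)(z_0)\leq 0$, i.e.\ $(\mathcal{L}_{t,\infty}\varphi)(z_0)\leq 0$; the reverse inequality for test functions touching from above is obtained symmetrically. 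Hence $v_\infty$ is a viscosity solution of $\mathcal{L}_{t,\infty}v=0$ in $\Omega\setminus\{y_\infty\}$, it vanishes outside $\Omega$, and its value at $y_\infty$ is $v_\infty(y_\infty)$, all by Theorem~\ref{thm32}. The same argument gives the statement for $u_\infty$.

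The only genuinely new point compared with Theorem~\ref{thm4} — and the step I expect to require the most care — is that the excluded point now moves with $n$: it is the maximum $x_n$ (resp.\ $y_n$), not a fixed $x_0$. The argument above needs precisely that the touching points $z_n$ stay away from $x_n$ (resp.\ $y_n$) for $n$ large, which is guaranteed by the convergences $x_n\to x_\infty$, $y_n\to y_\infty$ established in Theorem~\ref{thm32} together with $z_0\notin\{x_\infty,y_\infty\}$; note that only the convergence of the maxima, and not their precise location, enters. One should also check that the analogue of Proposition~\ref{visc} remains valid when the Dirac mass sits at an interior maximum rather than at a prescribed point, but this is immediate since its proof only uses that comparison functions are tested on the complement of that single point. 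Everything else is a routine transcription of Section~\ref{proofthm4}.
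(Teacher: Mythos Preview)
Your proposal is correct and follows exactly the route the paper indicates (``its proof is obtained by adapting that of Theorem~\ref{thm4}''), with the appropriate simplification that both right-hand sides are Dirac masses so no $\Lambda_{1,\infty}$-term survives. You also correctly isolate and handle the one genuinely new ingredient that the paper's one-line remark glosses over: the excluded singular point is now $x_n$ (resp.\ $y_n$) and moves with $n$, so one needs the convergence $x_n\to x_\infty$, $y_n\to y_\infty$ from Theorem~\ref{thm32} to ensure the touching points $z_n$ eventually avoid it; this is precisely the step that makes the adaptation nontrivial, and your treatment of it is the right one.
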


\end{document}